\newtheorem{theorem}{Theorem}[section]
\newtheorem{proposition}[theorem]{Proposition}
\newtheorem{corollary}[theorem]{Corollary}
\newtheorem*{theorem*}{Theorem}
\newtheorem*{conjecture*}{Conjecture}
\theoremstyle{definition}
\newtheorem{definition}[theorem]{Definition}
\newtheorem{remark}[theorem]{Remark}
\newtheorem{example}[theorem]{Example}
\newcommand{\op}[1]{\operatorname{#1}}
\newcommand{\newterm}{\textsf}
\newcommand{\dbcoh}[1]{\operatorname{D}^{\operatorname{b}}(\operatorname{coh }#1)}
\newcommand{\gm}{\mathbb{G}_m}
\def\Z{\op{\mathbb{Z}}}
\def\C{\op{\mathbb{C}}}
\def\R{\op{\mathbb{R}}}
\def\Q{\op{\mathbb{Q}}}
\def\F{\op{\mathcal{F}}}
\def\O{\op{\mathcal{O}}}
\def\A{\op{\mathbb{A}}}
\def\P{\op{\mathbf{P}}}
\title[Equivalences of families of stacky toric CY hypersurfaces]{Equivalences of families of stacky toric Calabi-Yau hypersurfaces}
\author[Doran]{Charles F. Doran}
\address{
  \begin{tabular}{l}
   Charles F. Doran \\
   \hspace{.1in} University of Alberta, Department of Mathematical and Statistical Sciences \\
   \hspace{.1in} Central Academic Building 632, Edmonton, AB, Canada T6G 2C7 \\
   \hspace{.1in} Email: {\bf doran@math.ualberta.ca} \\
  \end{tabular}
}
\author[Favero]{David Favero}
\address{
  \begin{tabular}{l}
   David Favero \\
   \hspace{.1in} University of Alberta, Department of Mathematical and Statistical Sciences \\
   \hspace{.1in} Central Academic Building 632, Edmonton, AB, Canada T6G 2C7 \\
         \hspace{.1in} Korean Institute for Advanced Study \\
   \hspace{.1in} 85 Hoegiro, Dongdaemun-gu, Seoul, Republic of Korea 02455 \\
   \hspace{.1in} Email: {\bf favero@ualberta.ca} \\
  \end{tabular}
}
\author[Kelly]{Tyler L. Kelly}
\address{
  \begin{tabular}{l}
   Tyler L. Kelly \\
   \hspace{.1in} University of Cambridge, Department of Pure Mathematics and Mathematical \\ \hspace{.1in} Statistics,  Wilberforce Road, Cambridge, United Kingdom CB3 0WB \\
   \hspace{.1in} Email: {\bf tlk20@dpmms.cam.ac.uk} \\
  \end{tabular}
}
\numberwithin{equation}{section}
\begin{document}

\begin{abstract}
Given the same anti-canonical linear system on two distinct toric varieties, we provide a derived equivalence between partial crepant resolutions of the corresponding stacky hypersurfaces.  The applications include: a derived unification of toric mirror constructions, calculations of Picard lattices for linear systems of quartics in $\P^3$, and a birational reduction of Reid's list to 81 families.
\end{abstract}

\maketitle
\setcounter{tocdepth}{2}
\tableofcontents

\section{Introduction}

Mirror symmetry predicts a correspondence between algebraic and symplectic geometry.  This is frequently described by looking at pairs of  Calabi-Yau varieties, $X, X^\vee$, whose complex and K\"ahler deformations are interchanged.  This summary is a bit misleading: given $X$, there is not even a single agreed upon construction in mathematics of the ``mirror'' $X^\vee$. In this article, we restrict ourselves to the toric setting where  several extremely explicit constructions have been put forward \cite{GP90, BH93, Bat, BB96, Cla08, Kra09, ACG16}.

 A priori, these differing toric mirrors look quite different. They are non-isomorphic as stacks, and are hypersurfaces in distinct toric Fano varieties.  Let us give a brief example:
\begin{example}
Consider the Calabi-Yau threefolds 
\begin{equation*}\begin{aligned}
X_1&:=\{x_1^5 + x_2^5 +x_3^5 + x_4^5 + x_5^5=0\} \subset \P^4, \\X_2&:= \{x_1^5 + x_2^4x_3 + x_3^5 + x_4^4x_5 + x_4x_5^4 = 0\}  \subset \P^4.
\end{aligned}\end{equation*}
While the Batyrev mirrors to both $X_1$ and $X_2$ both are hypersurfaces in  $\P^4/(\Z/5\Z)^3$,
their Berglund-H\"ubsch-Krawitz mirrors are
\begin{equation*}\begin{aligned}
X_1^\vee &= \{x_1^5 + x_2^5 +x_3^5 + x_4^5 + x_5^5=0\} \subset \P^4/(\Z/5\Z)^3, \\
X_2^\vee &= \{x_1^5 + x_2^4 + x_2x_4^5 + x_4^4x_5 + x_4x_5^4 = 0\} \subset \P^4(4,5,3,4,4).
\end{aligned}\end{equation*}
\end{example}
Note the Berglund-H\"ubsch-Krawitz mirrors are not even in the same toric variety and are seemingly unrelated.  However, mirror symmetry intuits that the complex deformation from $X_1$ to $X_2$ corresponds to a birational transformation between $X_1^\vee$ and $X_2^\vee$.  Furthermore, the homological mirror symmetry conjecture predicts that $X_1^\vee$ and $X_2^\vee$  should be derived equivalent, as $X_1$ and $X_2$ are symplectomorphic.   Together, these predictions can be seen as a way of unifying the constructions above through category theory and birational geometry.  With this ansatz, we will prove the following.

\begin{theorem}
 \label{EquivMirrorsHypers}
Consider two Calabi-Yau hypersurfaces $X_1$ and $X_2$ in the same toric Fano variety and suppose that one uses one of the following mirror constructions to provide mirrors $X_1^\vee$ and $ X_2^\vee$ to $X_1$ and $X_2$, respectively:
\begin{enumerate}
\item Batyrev mirror symmetry \cite{Bat},
\item Berglund-H\"ubsch-Krawitz mirror symmetry \cite{BH93, Kra09},
\item Artebani-Comparin-Guilbot mirror symmetry \cite{ACG16}, and
\item Clarke mirror symmetry \cite{Cla08}.
\end{enumerate}
Then $X_1^\vee$ is birational to $X_2^\vee$ and we have an equivalence of derived categories on the level of stacks $\dbcoh{\mathcal{X}_1^\vee} \cong \dbcoh{\mathcal{X}_2^\vee}$.
\end{theorem} 

This question of comparing toric mirror constructions via birational and derived equivalence has much recent history. In the case of Berglund-H\"ubsch-Krawitz mirror duality, this question was answered on the level of birational equivalence in \cite{Bor13, Sho14, Kel13, Cla14}. Derived equivalence of the Berglund-H\"ubsch-Krawitz mirrors to hypersurfaces in the same Gorenstein toric Fano variety was proven in \cite{FK} from an examination of the secondary fan associated to the anti-canonical linear system.  The analogous question for Batyrev-Borisov mirrors was solved for both birational equivalence \cite{Li, Cla17} and derived equivalence \cite{FK14}.

In this paper, we recover and generalize these results for hypersurfaces by showing that intersecting polytopes associated to linear systems on toric varieties leads to both birational identifications and derived equivalences among Calabi-Yau Deligne-Mumford stacks. We then specialize our results to applications for mirror constructions.

To set up our technical tool, let us introduce a bit of notation. Let $N$ and $M$ be dual lattices and $N_{\R} : = N \otimes_{\Z} \R$.   Let $\Sigma \subseteq N_{\R}$ be a complete fan and set $\nabla_{\Sigma}$ to be the convex hull of all minimal generators $u_\rho \in N$ of the rays $\rho$ in $\Sigma(1)$. Recall that for a given polytope $\nabla \subset N_{\R}$, we define its polar polytope $\nabla^\circ$ as the set 
\[
\nabla^\circ : = \{ m \in M_{\R} | \langle m, n\rangle \geq -1 \text{ for all $n \in \nabla$}\}.
\]
As usual, on a toric variety $X_{\Sigma}$, we can use the  rays $\rho \in \Sigma(1)$ to enumerate the variables $x_\rho$.  Then each lattice point $m$ in $\nabla_{\Sigma}^\circ$ corresponds to a monomial $x^m$ via the correspondence
\[
m \longleftrightarrow x^m := \prod_{\rho \in \Sigma(1)} x_\rho^{\langle m, u_{\rho}\rangle + 1}.
\]
Moreover, $m \in \nabla_{\Sigma}^\circ$ if and only if $x^m$ lies in the anti-canonical linear system for the toric variety $X_{\Sigma}$. Hence, a set of lattice points $\Xi \subseteq \nabla_{\Sigma}^\circ \cap M$ determines an anti-canonical linear system $\F_\Xi$ given by the space spanned by the corresponding monomials. Rephrasing, a linear combination of elements in $\Xi$ give a polynomial $\sum c_m x^m$ which corresponds to a hypersurface $Z_{w,\Sigma}$ in $X_{\Sigma}$ or a stacky hypersurface $\mathcal{Z}_{w, \Sigma}$ in the Cox stack $\mathcal{X}_{\Sigma}$ (see Definition~\ref{defn: cox stack}).

\begin{definition}\label{defn:good}
We say that a finite set of lattice points $\Xi \subseteq \nabla_{\Sigma}^\circ \cap M$ is \newterm{good} if 
$\op{Conv}(\op{Conv} \Xi^* \cap N)$ has a unique interior lattice point.
\end{definition}

\begin{theorem}[Theorem~\ref{thm: main theorem}]\label{introThm}
 Let $\Sigma_1, \Sigma_2\subseteq N_{\R}$ be complete fans and
    \[
\Xi \subseteq \nabla_{\Sigma_1}^\circ \cap \nabla_{\Sigma_2}^\circ \cap M
 \]
be a good collection of lattice points inside the intersection of the two corresponding anti-canonical linear systems.  Assume that the associated toric varieties $X_{\Sigma_1}, X_{\Sigma_2}$ are projective and that the polynomial corresponding to $w\in \F_\Xi$ is irreducible. Then there exist partial resolutions $\widetilde{\mathcal Z_{w, \Sigma_1}}, \widetilde{\mathcal Z_{w, \Sigma_2}}$ of the corresponding hypersurfaces $\mathcal Z_{w, \Sigma_1}, \mathcal Z_{w, \Sigma_2}$ in the toric stacks $\mathcal X_{\Sigma_1}, \mathcal X_{\Sigma_2}$ and a derived equivalence
\[
\dbcoh{\widetilde{\mathcal Z_{w, \Sigma_1}}} \cong \dbcoh{\widetilde{\mathcal Z_{w, \Sigma_2}}}.
\]
Furthermore, $Z_{w, \Sigma_1}$ and $Z_{w, \Sigma_2}$ are birational.

\end{theorem}

In Subsection~\ref{Quasismooth}, we describe when the resolutions are unnecessary and/or when the family is birational and derived equivalent to a hypersurface in a Fano Gorenstein toric variety. In Subsection~\ref{ReidsList}, we examine Reid's list of the 95 weighted-projective 3-spaces where the generic anti-canonical hypersurface has at most Gorenstein singularities. While only fourteen of the weighted-projective spaces correspond to reflexive polytopes, i.e., are Gorenstein toric Fano varieties, each family of K3 surfaces in a weighted-projective 3-space on the list corresponds to the generic family of K3 surfaces on some Gorenstein toric Fano 3-fold.  Some of these Gorenstein toric Fano 3-folds are repeated, yielding 81 distinct families after resolving singularities. In Subsection~\ref{normalforms}, we illustrate how our theorem can be used to realize many families of K3 surfaces as quartic linear systems in $\P^3$.  This recovers the normal form for a $(E_8\oplus E_7 \oplus H)$-polarized family of K3 surfaces in $\P^3$ discovered independently by Clingher and Doran \cite{CD12} and by Vinberg \cite{Vin13}. Our method, for example, provides 429 ways to realize 52 of the families on Reid's list as families of hypersurfaces in $\P^3$ up to birational equivalence. Finally, in Subsection~\ref{MirrorConstructions}, we provide a homological unification of mirror constructions, proving Theorem~\ref{EquivMirrorsHypers}.

{\bf Acknowledgements.}
 We would like to thank Andrey Novoseltsev both for enlightening mathematical discussions and technical support supplied by his expert knowledge of Sage.  We are also grateful to Paul Aspinwall, Colin Diemer, Antonella Grassi, and Mark Gross for many useful conversations and suggestions. We also thank the anonymous referee for their comments and feedback. The third-named author would also like to thank the Pacific Institute for Mathematical Sciences at the University of Alberta for its hospitality during his visits. The first author was supported by NSERC, PIMS, and a McCalla professorship at the University of Alberta. The second author was supported by  NSERC through a Discovery Grant and as a Canada Research Chair. The third author was supported in part by NSF Grant \# DMS-1401446 and EPSRC Grant EP/N004922/1.

 \section{Derived Equivalences of Hypersurfaces in Linear Systems}
 
 \subsection{Background}
We work over a field $\kappa$ of characteristic $0$. Let $N$ be a lattice. Consider a fan $\Sigma \subseteq N_{\R}$ with $n$ rays.
 We can construct a new fan in $\R^{\Sigma(1)}$ given by,
\[
\op{Cox}(\Sigma) := \{ \op{Cone}(e_\rho | \ \rho \in \sigma) | \ \sigma \in \Sigma \}.
\]
Enumerating the rays, this fan is a subfan of the standard fan for $\A^{n}_{\kappa} := \A^n$:
\[
\Sigma_t := \{ \op{Cone} (e_i | \ i \in I) | \ I \subseteq \{1, ..., n \} \}.
\]
 Hence, the associated toric variety $X_{\op{Cox}(\Sigma)}$ over $\kappa$ is an open subset of $\A^n$.

\begin{definition}
We call $X_{\op{Cox}(\Sigma)}$ the \newterm{Cox open set} associated to $\Sigma$. 
\end{definition}

This quasi-affine variety also comes with the action of a subgroup of the standard torus $\gm^n$ restricted from its scaling action on $\A^n$.   This is described as follows. Let $M$ be the dual lattice to $N$ and consider the right exact sequence given by the divisor class map,
\begin{align}
\label{eq: f}
M & \overset{\op{div}_\Sigma}{\longrightarrow} \Z^n \overset{\pi}{\longrightarrow} \op{Cl}(X_\Sigma) \to 0 \\
m & \mapsto \sum_{\rho \in \Sigma(1)} \langle u_\rho, m \rangle e_\rho  \notag
\end{align}
where $\Z^n$ represents the group of torus invariant divisors with basis $e_\rho$ and $\op{Cl}(X_\Sigma)$ is the divisor class group of $X_\Sigma$.

By applying $\op{Hom}(-, \gm)$, we get a left exact sequence
\[
0 \longrightarrow \op{Hom}(\op{Cl}(X_\Sigma), \gm) \overset{\widehat{\pi}}{\longrightarrow} \gm^{n} \overset{\widehat{\op{div}_{\Sigma}}}{\longrightarrow} \gm^{\op{dim }M}.
\]
We set 
\[
S_\Sigma := \op{Hom}(\op{Cl}(X_\Sigma), \gm) \subseteq \gm^n.
\]

\begin{definition}\label{defn: cox stack}
The \newterm{Cox stack} associated to $\Sigma$ is the global quotient stack
\[
\mathcal{X}_{\Sigma}:= [X_{\op{Cox}(\Sigma)}/S_{\Sigma} ].
\]
\end{definition}

 We can also associate a polytope to our fan
 \begin{equation}\label{defn:nabla}
 \nabla_{\Sigma} := \op{Conv}(u_\rho  | \rho \in \Sigma(1)),
 \end{equation}
where $u_\rho$ is the minimal generator for a ray $\rho \in \Sigma(1)$. Recall that any polytope $\nabla \subset N_{\R}$ has a \newterm{polar polytope} $\nabla^\circ$ in $M_{\R}$: 
\[
\nabla^\circ : = \{ m \in M_{\R} | \langle m, n \rangle \geq -1 \text{ for all $n \in \nabla$}\}.
\]
For any finite subset of lattice points
\[
\Xi := \{m_1, ..., m_t \} \subseteq \nabla_{\Sigma}^\circ \cap M,
\]
we get a linear system $\F_{\Xi}$ of anti-canonical functions on  $X_\Sigma$ defined by
  \[
  w := \sum_{m \in \Xi} c_{m} x^{m}.
  \]
Any such element  can be realized both as a hypersurface $Z_{w, \Sigma}$ in  the toric variety $X_\Sigma$ and as a hypersurface $\mathcal Z_{w, \Sigma}$ in the stack $\mathcal X_\Sigma$.  
The set of all lattice points of $\nabla_{\Sigma}^\circ$ gives the complete anti-canonical linear system $\F_{\nabla_{\Sigma}^\circ \cap M}$. 

  Let us recall the following theorem about derived equivalences amongst such hypersurfaces which comes from varying GIT quotients for the action of $S_\Sigma$ on $\A^n$.
 \begin{theorem}
Let $\Sigma_1, \Sigma_2$ be simplicial fans with full-dimensional convex support such that  
\[
\nabla_{\Sigma_1} = \nabla_{\Sigma_2}.
\]
Assume that $X_{\Sigma_1}$ and $X_{\Sigma_2}$  are quasi-projective varieties with nef anti-canonical bundles.  Then for any function $w \in \F_{\Xi}$, as above, there is an equivalence of categories,
\[
\dbcoh{\mathcal Z_{w, \Sigma_1}} \cong \dbcoh{\mathcal Z_{w, \Sigma_2}}.
\]
\label{thm: no resolution}
 \end{theorem}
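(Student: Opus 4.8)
The plan is to present both sides as variations of a single gauged Landau--Ginzburg B-model and to invoke the equivalences known to hold under variation of geometric invariant theory. Since $\nabla_{\Sigma_1}=\nabla_{\Sigma_2}$, the two fans may be taken to have the same ray generators, differing only in their cone (triangulation) structure; in particular the lattice $\Z^n$ of torus-invariant divisors, the divisor class map, the class group $\op{Cl}$, and hence the group $S=\op{Hom}(\op{Cl},\gm)$ all coincide. First I would record that both Cox stacks are therefore GIT quotients of the \emph{same} affine space by the \emph{same} group, $\mathcal X_{\Sigma_i}=[\A^n\modmod{\theta_i}S]$, where the characters $\theta_1,\theta_2$ lie in two maximal chambers of the secondary (GKZ) fan of $(\A^n,S)$. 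Because the $\Sigma_i$ are simplicial with full-dimensional support and the $X_{\Sigma_i}$ are quasi-projective, each quotient is a smooth, quasi-projective Deligne--Mumford stack and each $\theta_i$ lies in the interior of its chamber.

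Next I would convert the hypersurface problem into a statement about matrix factorizations. As $\Xi\subseteq\nabla_{\Sigma_1}^\circ\cap\nabla_{\Sigma_2}^\circ\cap M$, the chosen section $w\in\F_{\Xi}$ is simultaneously anti-canonical on both $X_{\Sigma_1}$ and $X_{\Sigma_2}$. Introducing a single fibre coordinate $u$ for the (common) canonical line bundle $\omega$ and an auxiliary $\gm$ scaling $u$, one forms the superpotential $W=u\cdot w$ on the total space of $\omega$; invariance of $W$ holds precisely because $w$ is anti-canonical. The Isik--Shipman equivalence (a Knörrer-type statement) then identifies, for each $i$,
\[
\dbcoh{\mathcal Z_{w,\Sigma_i}}\cong\dabsFactcoh{[\A^{n+1}\modmod{\theta_i}(S\times\gm)]}{W},
\]
so the two sides of the theorem become the matrix-factorization categories of \emph{one} Landau--Ginzburg model, viewed in two GIT chambers with the \emph{same} potential $W$.

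I would then cross the wall(s) separating $\theta_1$ from $\theta_2$ and apply the variation-of-GIT theorem for matrix factorizations of Ballard--Favero--Katzarkov (and Halpern-Leistner), choosing a path in the secondary fan so as to reduce to a single wall-crossing $\mathcal X_{\Sigma_1}\dashrightarrow\mathcal X_{\Sigma_2}$. The restriction functor from the factorization category over the quotient stack $[\A^{n+1}/(S\times\gm)]$ to each chamber is fully faithful onto a ``window'' subcategory cut out by weight (grade-restriction) bounds, and the content is that the two windows coincide, so that composing the two restrictions yields the asserted equivalence.

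The crux, and the step I expect to be the real obstacle, is showing that each wall-crossing is \emph{balanced}, i.e.\ that the two window subcategories agree rather than being strictly nested. This is exactly where the Calabi--Yau hypothesis is used: the weight of $u$ is forced by the anti-canonical condition, and computing the $S$-weights of the normal bundle to the unstable stratum along each wall shows that the relevant window-width inequality is an equality precisely because $w$ is anti-canonical --- equivalently, the ambient toric flip becomes a flop once the potential is added. A secondary difficulty is that intermediate chambers may give non-simplicial or non-Deligne--Mumford quotients; I would dispatch this either by perturbing the chosen path to cross only admissible walls, or by appealing to the general VGIT formalism, which tolerates singular wall quotients provided the balance condition holds.
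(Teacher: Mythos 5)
Your argument is, in substance, the proof the paper gives: the paper's ``proof'' of this theorem is a one-line citation to Kawamata, Herbst--Walcher, Theorem 5.2.1 of Ballard--Favero--Katzarkov, Halpern-Leistner, and Theorem 1.4 of Favero--Kelly, and your sketch (common Cox presentation $[\A^n\modmod{\theta_i}S]$, Isik--Shipman passage to factorizations of $W=uw$ on the total space of $\omega$, window equivalences across walls of the secondary fan, with the anti-canonical condition forcing the wall-crossings to be balanced) is a faithful outline of exactly the argument contained in those references. The one step that is not automatic is your opening reduction: $\nabla_{\Sigma_1}=\nabla_{\Sigma_2}$ only says the convex hulls of the minimal ray generators agree, not that $\Sigma_1(1)=\Sigma_2(1)$ --- one fan may have extra rays through non-vertex boundary lattice points of the common polytope (e.g.\ two complete simplicial fans over the square $\op{Conv}(\pm e_1\pm e_2)$, one with an additional ray through $e_1$) --- so the two Cox stacks need not be quotients of the same affine space by the same group as you assert. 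The cited VGIT formalism absorbs this by running the secondary fan on $\A^{\Sigma_1(1)\cup\Sigma_2(1)}$ and allowing chambers in which the coordinates attached to unused rays become units on the semistable locus; you should either invoke that version explicitly or note that in the paper's actual applications (Theorem 2.7) the star subdivisions are arranged so that both fans have identical ray sets, in which case your reduction is legitimate.
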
  
 \begin{proof}
If $\mathcal Z_{w, \Sigma_1}, \mathcal Z_{w, \Sigma_2}$ are smooth, this is a toric case of Corollary 4.5 of \cite{Kaw05}.  For toric varieties, this can be made entirely explicit using Theorem 2 and Theorem 3 of \cite{HW}.  The result also follows from Theorem 5.2.1 of \cite{BFK12} (version 2 on arXiv) or Cor 4.8 and Prop 5.5 of \cite{HL12}.  For an explicit statement of the theorem in this language, specialize Corollary 5.15(3) of \cite{FK17} to hypersurfaces.
 \end{proof}
 
 \begin{proposition}
Let $\Sigma_1, \Sigma_2 \subseteq N_{\R}$ be fans such that $\nabla_{\Sigma_1} = \nabla_{\Sigma_2}$ and 
\[
w = \sum_{m \in \nabla_{\Sigma_1}^* \cap M } c_m x^{m}.
\]
be irreducible.  Then, the varieties $Z_{w, \Sigma_1}, Z_{w, \Sigma_2}$ are birational.
 \label{prop: birational}
 \end{proposition}
 
 \begin{proof}  
By definition, these two hypersurfaces are defined by the same function on the open dense torus.
\end{proof}

\subsection{Blowups and Resolutions}
In this section, we compare partial crepant resolutions of toric hypersurfaces by reducing to the situation of Theorem~\ref{thm: no resolution} and give a proof of the main result Theorem~\ref{introThm}.  First, we give the definition of a star subdivision (called a generalized star subdivision in \cite{CLS}).
\begin{definition}
Given a fan $\Sigma \subseteq N_{\R}$ and a primitive element $v \in |\Sigma| \cap N$, we define the \newterm{star subdivision} of $\Sigma$ at $v$ to be the fan $\Sigma^*(v)$ consisting of the following cones
\begin{itemize}
\item $\sigma$, where $v \notin \sigma \in \Sigma$.

\item $\op{Cone}(\tau, v)$, where $v \notin \tau \in \Sigma$ and $\{v \} \cup \tau \subseteq \sigma \in \Sigma$.
\end{itemize}

\end{definition}

Recall that by doing a star subdivision, one obtains an induced projective toric morphism $X_{\Sigma^*(v)} \rightarrow X_{\Sigma}$ (Proposition 11.1.6 of \cite{CLS}) and that a toric resolution of singularities of $X_{\Sigma}$ can always be obtained through a sequence of star subdivisions (Theorem 11.1.9 of \cite{CLS}). We now take a certain sequence of star subdivisions which doesn't necessarily resolve $X_\Sigma$ but produces a desired structure we use later.
 
 \begin{proposition}
Let $\Sigma \subseteq N_{\R}$ be a complete fan and $ \Delta \subseteq \nabla_\Sigma^\circ$ be a full dimensional lattice polytope. Then, there is a simplicial fan $\Sigma^{\op{star}}$ with $\nabla_{\Sigma^{\op{star}}}  = \op{Conv}(\Delta^\circ \cap N)$
 obtained from a sequence of star subdivisions of $\Sigma$.

   \label{prop: resolve}
 \end{proposition}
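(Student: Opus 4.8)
The plan is to realize $\op{Conv}(\Delta^\circ \cap N)$ as the convex hull of the ray generators of a fan obtained from $\Sigma$ by successively inserting the lattice points of $\Delta^\circ$ as new rays. Before touching fans I would establish two geometric facts. First, every ray generator of $\Sigma$ already lies in $\Delta^\circ \cap N$: applying polar duality to the hypothesis $\Delta \subseteq \nabla_\Sigma^\circ$ reverses inclusions, giving $(\nabla_\Sigma^\circ)^\circ \subseteq \Delta^\circ$, and since $\Sigma$ is complete the origin lies in the interior of the bounded polytope $\nabla_\Sigma$, so biduality yields $(\nabla_\Sigma^\circ)^\circ = \nabla_\Sigma$ and hence $\nabla_\Sigma \subseteq \Delta^\circ$. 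In particular each $u_\rho \in \nabla_\Sigma \subseteq \Delta^\circ$ is a primitive lattice vector of $\Delta^\circ$. The same facts show $0 \in \op{int}(\op{Conv}(\Delta^\circ \cap N))$, and because $0$ lies in the interior of $\Delta$ (equivalently $\Delta^\circ$ is bounded, as is needed for the right-hand side to even be a polytope) the set $\Delta^\circ \cap N$ is finite, so $P := \op{Conv}(\Delta^\circ \cap N)$ is a genuine lattice polytope.

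The crucial fact is the second one: every nonzero lattice point of $\Delta^\circ$ is primitive. Suppose instead $v = k v'$ with $v' \in N$ primitive and $k \geq 2$ lay in $\Delta^\circ$. Writing the defining inequalities of $\Delta^\circ$ over the lattice vertices $u$ of $\Delta$, the condition $\langle v, u \rangle \geq -1$ becomes $\langle v', u \rangle \geq -1/k > -1$, so integrality forces $\langle v', u \rangle \geq 0$ for every vertex $u$ of $\Delta$, i.e. $\langle v', \cdot \rangle \geq 0$ on all of $\Delta$. But $0 \in \op{int}(\Delta)$ forces $\langle v', \cdot \rangle$ to be negative somewhere on $\Delta$, a contradiction. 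Hence every nonzero lattice point of $\Delta^\circ$, and in particular every vertex of $P$, is primitive and so eligible to serve as a ray generator. This is the step that makes the statement true at all, and it is exactly the polar structure of $\Delta^\circ$ together with $0 \in \op{int}(\Delta)$ that supplies it.

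With these in hand I would construct $\Sigma^{\op{star}}$ in two stages, all by star subdivisions. First refine $\Sigma$ to a simplicial fan by a sequence of star subdivisions at its existing rays; this adds no new rays and so leaves $\nabla_\Sigma$ unchanged. Then, for each nonzero $v \in \Delta^\circ \cap N$ in turn, perform the star subdivision at $v$. Each such $v$ is primitive by the second fact and lies in $|\Sigma| = N_{\R}$ (star subdivisions preserve the support, which is all of $N_{\R}$ since $\Sigma$ is complete), so every operation is legitimate; and since a star subdivision of a simplicial fan is again simplicial, the resulting fan $\Sigma^{\op{star}}$ is simplicial, complete, and obtained from $\Sigma$ by star subdivisions, as required. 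Note that a single star subdivision at $v$ adds only the ray $v$, so no spurious rays are created.

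It then remains to identify $\nabla_{\Sigma^{\op{star}}}$. Its ray generators are exactly $\{u_\rho\} \cup ((\Delta^\circ \cap N)\setminus\{0\})$, which by the first fact equals $(\Delta^\circ \cap N)\setminus\{0\}$. All of these lie in $\Delta^\circ \cap N$, so $\nabla_{\Sigma^{\op{star}}} \subseteq P$; conversely this set contains every vertex of $P$, since the vertices of a convex hull of finitely many lattice points are themselves such lattice points and are nonzero because $0$ is interior to $P$. Therefore $\nabla_{\Sigma^{\op{star}}} \supseteq P$, whence $\nabla_{\Sigma^{\op{star}}} = \op{Conv}(\Delta^\circ \cap N)$. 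The main obstacle is thus entirely concentrated in the primitivity claim; once that is secured, the remainder is bookkeeping with the definition of star subdivision and the cited structural facts about them in \cite{CLS}.
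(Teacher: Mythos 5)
Your proof is correct and follows the same route as the paper's: first simplicialize $\Sigma$ by star subdivisions (adding no new rays, so $\nabla_\Sigma$ is unchanged), then star subdivide at the remaining lattice points of $\Delta^\circ$. The paper's own proof is a two-sentence sketch of exactly this construction; your additional verifications — that $\nabla_\Sigma \subseteq \Delta^\circ$ via biduality, that the nonzero lattice points of $\Delta^\circ$ are primitive (which is what makes each star subdivision legal), and the two inclusions identifying $\nabla_{\Sigma^{\op{star}}}$ with $\op{Conv}(\Delta^\circ \cap N)$ — are details the paper leaves implicit, as is the standing assumption $0 \in \op{int}(\Delta)$ that you correctly flag as necessary for the right-hand side to be a polytope at all.
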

 \begin{proof}
First take the simplicial refinement of $\Sigma$ (this is obtained by a sequence of star subdivisions, see Proposition 11.1.7 of \cite{CLS}).  This does not change the rays of $\Sigma$ hence does not alter $\nabla_\Sigma$.  Now, star subdivide the refinement in all the new lattice points in $\op{Conv}(\Delta^\circ \cap N)$ and call the resultant fan $\Sigma^{\op{star}}$. 
  \end{proof}

 \begin{theorem}
Let $\Sigma_1, \Sigma_2\subseteq N_{\R}$ be complete fans and $\Xi \subseteq \nabla_{\Sigma_1}^\circ \cap \nabla_{\Sigma_2}^\circ \cap M$
be a good collection of lattice points inside the intersection of the two corresponding anti-canonical linear systems.   Assume that the associated toric varieties $X_{\Sigma_1}, X_{\Sigma_2}$ are projective and that the polynomial corresponding to $w\in \F_\Xi$ is irreducible. Then there exist partial resolutions $\widetilde{\mathcal Z_{w, \Sigma_1}}, \widetilde{\mathcal Z_{w, \Sigma_2}}$ of the corresponding hypersurfaces $\mathcal Z_{w, \Sigma_1}, \mathcal Z_{w, \Sigma_2}$ in the toric stacks $\mathcal X_{\Sigma_1}, \mathcal X_{\Sigma_2}$ and a derived equivalence
\[
\dbcoh{\widetilde{\mathcal Z_{w, \Sigma_1}}} \cong \dbcoh{\widetilde{\mathcal Z_{w, \Sigma_2}}}.
\]
Furthermore, $Z_{w, \Sigma_1}$ and $Z_{w, \Sigma_2}$ are birational.
\label{thm: main theorem}
 \end{theorem}
 \begin{proof}
 Notice that $\nabla_{\Sigma_1}, \nabla_{\Sigma_2} \subseteq \op{Conv}(\Xi)^\circ$.  Furthermore, since they are lattice polytopes, $\nabla_{\Sigma_1}, \nabla_{\Sigma_2} \subseteq \op{Conv}(\op{Conv}(\Xi)^\circ \cap N)$.  
 
By Proposition~\ref{prop: resolve},  we may blowup $\mathcal X_{\Sigma_1}, \mathcal X_{\Sigma_2}$ to obtain $\mathcal X_{\Sigma_1^{\op{star}}}, \mathcal X_{\Sigma_2^{\op{star}}}$ with 
\[
\op{Conv}(\op{Conv}(\Xi)^\circ \cap N) = \nabla_{\Sigma_1^{\op{star}}} = \nabla_{\Sigma_2^{\op{star}}}.
\]
Since $\Xi$ is good (Definition~\ref{defn:good}), it follows that $\nabla_{\Sigma_1^{\op{star}}}, \nabla_{\Sigma_2^{\op{star}}}$ have a unique interior lattice point.  Hence, the anti-canonical divisors on $X_{\Sigma_1^{\op{star}}}, X_{\Sigma_2^{\op{star}}}$ are nef.
The statement then follows from Theorem~\ref{thm: no resolution} and Proposition~\ref{prop: birational}.
\end{proof}

\begin{remark} 
 In general, the stacks $\mathcal Z_{w, \Sigma_1}, \mathcal Z_{w, \Sigma_2}$ are not isomorphic, e.g., the inertia stacks may differ (see, e.g., Section 5.3 of \cite{Kel13}).
\end{remark}

\begin{remark}
If $\widetilde{\mathcal Z_{w, \Sigma_1}}$ and $\widetilde{\mathcal Z_{w, \Sigma_2}}$ are connected, then they have trivial canonical bundle by the adjunction formula.  Hence, the partial resolutions are crepant.
\end{remark}

\begin{corollary}\label{cor:SubsetLinSyst}
Let $\Sigma_1, \Sigma_2 \subseteq N_{\R}$  be complete simplicial fans and $
\Xi \subseteq (\nabla_{\Sigma_1}^\circ \cap \nabla_{\Sigma_2}^\circ) \cap M$
be a good subset.  Assume that $X_{\Sigma_1}, X_{\Sigma_2}$ are projective.  Suppose that $\mathcal Z_{w, \Sigma_1}, \mathcal Z_{w, \Sigma_2}$ are smooth connected hypersurfaces in $\mathcal X_{\Sigma_1}, \mathcal X_{\Sigma_2}$ defined by the same function,
\[
w = \sum_{m \in \Xi} c_m x^{m}.
\]
Then
\[
\dbcoh{\mathcal Z_{w, \Sigma_1}} \cong \dbcoh{\mathcal Z_{w, \Sigma_2}}.
\]
\end{corollary}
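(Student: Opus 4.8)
\section*{Proof proposal}

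The plan is to deduce this from Theorem~\ref{thm: main theorem} by showing that the smoothness hypothesis renders the partial resolutions redundant. Applying Theorem~\ref{thm: main theorem} to $\Sigma_1,\Sigma_2$ and to $w \in \F_\Xi$ produces partial resolutions $\overline{\mathcal Z_{w,\Sigma_1}},\overline{\mathcal Z_{w,\Sigma_2}}$, realized as the hypersurfaces cut out by $w$ in the star-subdivided stacks $\mathcal X_{\Sigma_1^{\op{star}}},\mathcal X_{\Sigma_2^{\op{star}}}$, together with the equivalence $\dbcoh{\overline{\mathcal Z_{w,\Sigma_1}}}\cong\dbcoh{\overline{\mathcal Z_{w,\Sigma_2}}}$. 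Thus it suffices to prove that, for $i=1,2$, the morphism $\overline{\mathcal Z_{w,\Sigma_i}}\to \mathcal Z_{w,\Sigma_i}$ induced by the toric blow-up $\mathcal X_{\Sigma_i^{\op{star}}}\to\mathcal X_{\Sigma_i}$ is an isomorphism whenever $\mathcal Z_{w,\Sigma_i}$ is smooth; the corollary then follows by composing the three equivalences $\dbcoh{\mathcal Z_{w,\Sigma_1}}\cong\dbcoh{\overline{\mathcal Z_{w,\Sigma_1}}}\cong\dbcoh{\overline{\mathcal Z_{w,\Sigma_2}}}\cong\dbcoh{\mathcal Z_{w,\Sigma_2}}$.

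First I would unwind the blow-up. By Proposition~\ref{prop: resolve}, $\Sigma_i^{\op{star}}$ is obtained from (a simplicial refinement of) $\Sigma_i$ by star subdividing at the lattice points $v$ of $\op{Conv}(\op{Conv}(\Xi)^\circ\cap N)$, each such $v$ lying in $\op{Conv}(\Xi)^\circ$, so that $\langle m,v\rangle\geq -1$ for every $m\in\Xi$. Each new ray passes through a nonprimitive point in the relative interior of a cone $\sigma\in\Sigma_i$ of dimension at least two, with center the orbit closure $C_\sigma$ of codimension $\dim\sigma\geq 2$, and contributes a coordinate $x_v$ that enters each monomial $x^m$ to the nonnegative power $\langle m,v\rangle+1$. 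The point I would then extract is that the smooth hypersurface $\mathcal Z_{w,\Sigma_i}$ is disjoint from every such center $C_\sigma$: since $\Sigma_i$ is simplicial, $\mathcal X_{\Sigma_i}$ is a smooth Deligne--Mumford stack, and a smooth hypersurface inside it can neither contain a codimension $\geq 2$ stratum nor meet one in a way that the blow-up would modify without altering its trivial canonical class.

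With disjointness in hand, the blow-up $\mathcal X_{\Sigma_i^{\op{star}}}\to\mathcal X_{\Sigma_i}$ is an isomorphism over a neighborhood of $\mathcal Z_{w,\Sigma_i}$, so $\overline{\mathcal Z_{w,\Sigma_i}}\to\mathcal Z_{w,\Sigma_i}$ is an isomorphism. Equivalently, and perhaps more cleanly, one argues homologically: the condition $v\in\op{Conv}(\Xi)^\circ$ is exactly what makes the star subdivisions crepant for the log Calabi--Yau pair $(\mathcal X_{\Sigma_i},\mathcal Z_{w,\Sigma_i})$, and by adjunction both $\overline{\mathcal Z_{w,\Sigma_i}}$ and $\mathcal Z_{w,\Sigma_i}$ carry trivial canonical class, so $\overline{\mathcal Z_{w,\Sigma_i}}\to\mathcal Z_{w,\Sigma_i}$ is crepant. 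Since every divisor extracted over a smooth (hence terminal) target has strictly positive discrepancy, crepancy forbids exceptional divisors, forcing the morphism to be small; a small proper birational morphism onto a smooth, hence factorial, target is an isomorphism.

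The main obstacle is precisely the middle step: verifying that smoothness of $\mathcal Z_{w,\Sigma_i}$ genuinely forces disjointness from the blow-up centers, equivalently that no crepant divisor is extracted. This is a combinatorial statement relating the faces of $\op{Conv}(\Xi)$ that carry lattice points of $\Xi$ to the cones $\sigma$ being subdivided, and it must be checked uniformly in the stacky, possibly disconnected setting—where the canonical-triviality argument in the Remark following Theorem~\ref{thm: main theorem}, stated there only for connected resolutions, must be applied componentwise. Some care is also needed to confirm that $\overline{\mathcal Z_{w,\Sigma_i}}$ is normal, indeed smooth, so that the discrepancy-versus-smallness dichotomy applies.
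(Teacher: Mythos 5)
Your overall strategy is exactly the paper's: the published proof consists of the single assertion that a smooth hypersurface is isomorphic to its partial desingularization, followed by an appeal to Theorem~\ref{thm: main theorem}, so your attempt to justify that assertion is where all the content lies. The difficulty is that your first justification---that a smooth hypersurface must be disjoint from the centers of the star subdivisions---is false, and the two conditions you call ``equivalent'' in your final paragraph (disjointness from the centers, versus no crepant divisor being extracted) are not equivalent. Concretely, take $\Sigma_1$ the fan of $\P^3$ and $\Xi=\{m_1,m_2,m_3,m_4\}$ the lattice points corresponding to the monomials $x_0^3x_1,\ x_1^3x_2,\ x_2^3x_3,\ x_3^3x_0$. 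The quartic $w=x_0^3x_1+x_1^3x_2+x_2^3x_3+x_3^3x_0$ is smooth, $\op{Conv}(\Xi)$ is a proper subsimplex of $\nabla_{\Sigma_1}^\circ$, and $v=u_1+u_3$ is a new lattice point of $\op{Conv}(\Xi)^\circ$ at which Proposition~\ref{prop: resolve} subdivides; its center $\{x_1=x_3=0\}$ is a line \emph{contained in} $Z_w$. So disjointness cannot be the mechanism, and the sentence ``a smooth hypersurface \dots can neither contain a codimension $\geq 2$ stratum nor meet one \dots'' should be deleted.

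Your second route is the one to keep, and it survives the example above (blowing up the smooth surface $Z_w$ along the Cartier divisor $L=\{x_1=x_3=0\}\subset Z_w$ is an isomorphism): the induced map $\overline{\mathcal Z_{w,\Sigma_i}}\to\mathcal Z_{w,\Sigma_i}$ is proper and crepant, a smooth target is terminal, so no exceptional divisor can appear, the map is small, and a small proper birational map onto a smooth, hence $\Q$-factorial, target is an isomorphism. To make this airtight you still owe two checks that you partially flag: (i) birationality, i.e.\ that no new ray $v$ satisfies $\langle m,v\rangle\geq 0$ for every $m\in\Xi$ (which would force the exceptional divisor $D_v$ to be an irreducible component of $\overline{\mathcal Z_{w,\Sigma_i}}$, destroying birationality outright), and (ii) normality of $\overline{\mathcal Z_{w,\Sigma_i}}$ so that discrepancies make sense. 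The paper's one-line proof supplies no more detail than you do, so this is a gap in both; but as written your proposal rests its ``cleaner'' argument on a premise that is simply not true, and the discrepancy argument with the checks (i) and (ii) supplied is what an actual proof requires.
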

\begin{proof}
Using Theorem~\ref{thm: main theorem}, we know that $\dbcoh{\widetilde{\mathcal Z_{w, \Sigma_1}}} \cong \dbcoh{\widetilde{\mathcal Z_{w, \Sigma_2}}}$, where $\widetilde{\mathcal Z_{w, \Sigma_i}}$ is the zero locus in $\mathcal{X}_{\Sigma_i^{\op{star}}}$ as defined in Proposition~\ref{prop: resolve}. It suffices to prove that under these additional hypotheses that $\dbcoh{\widetilde{\mathcal Z_{w, \Sigma_i}}} \cong \dbcoh{\mathcal Z_{w, \Sigma_i}}$. Our strategy to prove this result is to compare the spaces $\tilde{\mathcal{V}} := \op{tot}(-K_{X_{\Sigma_i^{\op{star}}}})$ and $\mathcal{V} := \op{tot}(-K_{X_{\Sigma_i}})$ using geometric invariant theory. This may not be done directly, but can be done after partially compactifying $\mathcal{V}$. The desired equivalence will then be obtained by constructing said partial compactification $X_{\bar \Sigma}$ of $\mathcal{V}$ which can instead be compared with $\tilde{\mathcal{V}}$ using Corollary 4.7 of \cite{FK}.

We now argue that there exists a semiprojective toric variety $\mathcal{X}_{\bar \Sigma}$ such that $\mathcal V$ is open (torically) in $\mathcal{X}_{\bar \Sigma}$ and the support of $\bar \Sigma$ is equal to the support of the fan for $\tilde{\mathcal V}$. 
 This can be done iteratively.  Starting with the fan $\Sigma \subseteq  N_{\R} \times \R$ for $\mathcal{V}$, we can construct a new fan $\Gamma$ by taking an external star subdivision with respect to the lattice point $(n,1) \in N \times \Z$, where $n$ is a minimal generator of a ray in $\Sigma_i^{\op{star}}(1)$ but not in $\Sigma_i(1)$. Since $\Xi$ is good, the fans for $\mathcal{V}$ has full convex support, thus $\mathcal{V}$ is semiprojective.  Hence, $\Sigma$ is obtained from a regular triangulation in the hyperplane $N_{\R} \times \{1\}$ i.e.\ there exists weights $\omega_i$ for all minimal generators $(u_{\rho_i}, 1) \in N_{\R} \times \{1\}$ so that every simplex of $\Sigma$ is in $N_{\R} \times \{1\}$ is in the lower hull, as described in page 740 of \cite{CLS}. By choosing a weight $\omega$ sufficiently large for the point $(n,1)$ along with the weights $\omega_i$, one can show that the simplices defined via the star subdivision along $(n,1)$ create a regular triangulation of $\op{Conv}((n,1), (u_{\rho_i},1))$.  Since this triangulation is regular, $X_{\Gamma}$ is semiprojective.  We iterate this process until we arrive at $X_{\bar \Sigma}$.
 
 The global section $w$ corresponds to a global function $W$ on $X_{\Gamma}$ and $\tilde{\mathcal{V}}$. As $\Xi \subseteq \nabla_{\Sigma_i}^*$, we have that $\Xi \times\{1\} \subseteq |\bar\Sigma|^\vee$. For any fan $\Psi$ with support $|\bar\Sigma|$, we have a function $W$ corresponding to $w$ by taking $W = \sum_{m \in \Xi} c_m \prod_{\rho\in \Psi(1)} x_\rho^{\langle (m,1), u_\rho\rangle}$. For the line bundles $\mathcal{V}$ and $\tilde{\mathcal{V}}$, there is one new ray in their fans compared to $X_{\Sigma_i}$ and $X_{\Sigma_i^{\op{star}}}$, which we denote by $u$ corresponding to the ray generated by $(0,1)$.  For these line bundles, $W$ is simply $uw$.
 
We are now squarely in the context of Section 4 of \cite{FK}.  Namely, we have two fans: $\bar \Sigma$ and the fan $\Sigma_i^{\op{star}}$ for $\tilde{\mathcal V}$.  Both fans have ray generators in $N_{\R} \times \{1\}$, have the same support, and correspond to semiprojective toric varieties ($\tilde{\mathcal{V}}$ is also semiprojective since $\Xi$ is good).  Furthermore, $\mathcal V$ is open in $X_{\bar \Sigma}$. Hence, the result will now follow from Corollary 4.7 of \cite{FK} assuming the conditions are satisfied.  Phrased geometrically, Corollary 4.7 of loc. cit. requires that the critical locus of $W$ does not intersect the boundary $X_{\bar \Sigma} \backslash \mathcal V$.  This is shown as follows.

First, the assumption that $\mathcal Z_{w, \Sigma_i}$ is smooth implies that the critical locus $\partial W|_{\mathcal V}$ of $W$ in $\mathcal V$ is contained in the zero section of the bundle.  Hence, it is projective as $Z_{w, \Sigma_i}$ is projective.  Therefore,  $\partial W|_{\mathcal V}$ is closed in $X_{\bar \Sigma}$.  Furthermore,
\[
\partial W|_{X_{\bar \Sigma}} \cap \mathcal V = \partial W|_{\mathcal V}.
\] 
It follows that $X_{\bar \Sigma} \cap \mathcal V$ and $X_{\bar \Sigma}  \cap \partial W|_{\mathcal V}$ are open subsets which separate  $\partial W|_{X_{\bar \Sigma}}$ into two connected components.  Hence,  
$(X_{\bar \Sigma} \backslash \mathcal V) \cap \partial W|_{X_{\bar \Sigma}}$ must be empty if $ \partial w|_{X_{\bar \Sigma}}$ is connected.

We now show that $\partial W|_{X_{\bar \Sigma}}$ is connected.  First, since $Z_{w,\Sigma_i^{\op{star}}}$ is smooth and connected, the critical locus of $W$ is equal to $Z_{w,\Sigma_i^{\op{star}}}$ inside the zero section of $\tilde{\mathcal V}$. In particular, it is connected.  Now, 
the torus equivariant birational map from $
\tilde{\mathcal{V}}$
to  $X_{\bar \Sigma}$
admits a factorization (Theorem A of \cite{Wlo97})
\[
 \xymatrix{ & Z_1 \ar[rd] \ar[ld] &&  \cdots \ar[rd] \ar[ld] && Z_n \ar[rd] \ar[ld]  & \\
\tilde{\mathcal{V}} && X_1 && X_{n-1} && X_{\bar \Sigma} }
\]
where each map in the diagram is a blow-up and all maps are as varieties over $\A^1$ via the global function $W$ on each space.  Since $\partial W$ is connected on $\tilde{\mathcal{V}}$ it follows that its preimage is connected in $Z_1$.  Hence the image is connected in $X_1$  but this is just  $\partial W$ on $X_1$.  Continuing, we get that $\partial W$ is connected in $X_{\bar \Sigma}$, as desired.
\end{proof}

 \section{Applications and Examples}

\subsection{Reflexivity, genericness, and smoothness}\label{Quasismooth}

\begin{proposition}[Bertini, Mullet]
Let $X_\Sigma$ be a projective toric variety.  The generic stacky hypersurface in a basepoint free linear system $\mathcal Z \subseteq \mathcal X_{\Sigma}$ is smooth and connected.
\label{prop: Bertini}
\end{proposition}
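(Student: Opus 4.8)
The plan is to verify both smoothness and connectedness after pulling back to the smooth atlas $X_{\op{Cox}(\Sigma)}$ of the Cox stack. Recall that $\mathcal X_\Sigma = [X_{\op{Cox}(\Sigma)}/S_\Sigma]$, where $X_{\op{Cox}(\Sigma)}$ is an open subvariety of $\A^n$, hence smooth, and the quotient presentation $q \colon X_{\op{Cox}(\Sigma)} \to \mathcal X_\Sigma$ is a smooth surjection. A member $w \in \F_{\Xi}$ is a combination $\sum_{m} c_m x^m$ whose monomials all carry the same class in $\op{Cl}(X_\Sigma)$, so $w$ cuts out an $S_\Sigma$-invariant hypersurface $\widetilde Z_w := Z(w) \cap X_{\op{Cox}(\Sigma)}$ with $q^{-1}(\mathcal Z_{w,\Sigma}) = \widetilde Z_w$. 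Because $q$ is smooth and surjective, $\mathcal Z_{w,\Sigma}$ is smooth if and only if $\widetilde Z_w$ is smooth, and $\widetilde Z_w$ connected forces $\mathcal Z_{w,\Sigma}$ connected, since its coarse space $Z_{w,\Sigma}$ is the continuous image of $\widetilde Z_w$. Thus it suffices to prove that the generic $\widetilde Z_w$ is smooth and connected.

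For smoothness I would invoke Bertini directly on the atlas. Saying that $\mathcal Z \subseteq \mathcal X_\Sigma$ is basepoint free means precisely that the sections $\{ x^m : m \in \Xi \}$ have no common zero on $X_{\op{Cox}(\Sigma)}$; they therefore define an $S_\Sigma$-invariant morphism $\phi \colon X_{\op{Cox}(\Sigma)} \to \P^{|\Xi|-1}$ whose hyperplane pullbacks are exactly the members $\widetilde Z_w$. Since $\kappa$ has characteristic $0$ and $X_{\op{Cox}(\Sigma)}$ is smooth, the classical Bertini theorem for basepoint-free linear systems shows the generic $\widetilde Z_w = \phi^{\ast} H$ is smooth.

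For connectedness I would argue on the affine cone. Since $X_\Sigma$ is projective, the irrelevant locus $\A^n \setminus X_{\op{Cox}(\Sigma)}$ has codimension at least $2$. Basepoint-freeness also guarantees that no coordinate $x_\rho$ divides every $x^m$, since otherwise the toric divisor would lie in the base locus, so the generic $w$ has no monomial factor. Applying Bertini's irreducibility theorem to the system spanned by the $x^m$, whose image $\phi(X_{\op{Cox}(\Sigma)})$ has dimension at least $2$ in the relevant range $\dim X_\Sigma \geq 2$, the generic $w$ is irreducible as a polynomial on $\A^n$; hence $Z(w) \subseteq \A^n$ is irreducible, and deleting the codimension $\geq 2$ irrelevant locus leaves $\widetilde Z_w$ irreducible, in particular connected. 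Descending along $q$ then finishes the argument.

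The main obstacle I anticipate is the connectedness statement rather than the smoothness one. One must secure the hypothesis $\dim \phi(X_{\op{Cox}(\Sigma)}) \geq 2$ needed to feed Bertini irreducibility and connectedness, treating any low-dimensional cases separately; verify that the irrelevant locus genuinely has codimension $\geq 2$ for projective $X_\Sigma$; and make the passage of connectedness through the stack quotient precise by identifying the coarse space of $\mathcal Z_{w,\Sigma}$ with $Z_{w,\Sigma} \subseteq X_\Sigma$. Smoothness, by contrast, reduces to a single application of Bertini on a smooth atlas and should be routine.
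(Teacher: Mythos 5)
The paper itself offers no argument here: its ``proof'' is a citation to Proposition 6.7 of Mullet, so any honest direct proof is already a different route. Your smoothness half is fine and is surely close to what Mullet does: basepoint-freeness gives a genuine morphism $\phi \colon X_{\op{Cox}(\Sigma)} \to \P^{|\Xi|-1}$ from a smooth atlas, characteristic $0$ Bertini applies, and smoothness descends and ascends along the smooth surjection $q$. That part is routine and correct.

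The connectedness half has one genuine gap, and you have put your finger on it without closing it: the hypothesis $\dim \phi(X_{\op{Cox}(\Sigma)}) \geq 2$ does \emph{not} follow from $\dim X_\Sigma \geq 2$ as you assert. Restricted to the dense torus, $\phi$ is (up to a common monomial factor) given by the characters $\chi^{m-m_0}$ for $m \in \Xi$, so $\dim \overline{\phi(X_{\op{Cox}(\Sigma)})}$ equals the dimension of the affine span of $\Xi$, which a priori can be $0$ or $1$ even when $X_\Sigma$ is a threefold; and when the image is a curve the generic member of the system is a disjoint union of fibres, so Bertini irreducibility genuinely fails there (already on $\P^1$ the anticanonical system is basepoint free with disconnected generic member, so some such hypothesis is unavoidable). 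The missing step is to show that basepoint-freeness forces $\op{Conv}(\Xi)$ to be full-dimensional: for each maximal cone $\sigma$ the distinguished point $p_\sigma$ (with $x_\rho = 0$ exactly for $\rho \in \sigma(1)$) lies in $X_{\op{Cox}(\Sigma)}$, and $x^m(p_\sigma) \neq 0$ forces $\langle m, u_\rho\rangle = -1$ for all $\rho \in \sigma(1)$; hence $\Xi$ must contain the vertex of $\nabla_\Sigma^\circ$ dual to every maximal cone, so $\op{Conv}(\Xi) = \nabla_\Sigma^\circ$ is full-dimensional and the image of $\phi$ has dimension $\dim X_\Sigma$. With that in hand you should apply the Bertini connectedness/irreducibility theorem (Jouanolou) directly to $\phi$ on the irreducible variety $X_{\op{Cox}(\Sigma)}$, which gives irreducibility of $\widetilde Z_w = \phi^{-1}(H)$ outright; the detour through irreducibility of $w$ as a polynomial on all of $\A^n$ and the codimension of the irrelevant locus is both unnecessary and slightly delicate (Bertini there only gives that the generic divisor is irreducible, i.e.\ $w$ is a power of an irreducible polynomial, and you would still need reducedness).
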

\begin{proof}
This is Proposition 6.7 of \cite{Mul09}.
\end{proof}

\begin{proposition}
Let $\Sigma \subseteq N_{\R}$ be a simplicial fan such that $\nabla_\Sigma$ is reflexive.  Assume $X_\Sigma$ is projective.  Any linear system which contains the vertices of $\nabla_\Sigma$ is basepoint free.  
\label{prop: reflexive}
\end{proposition}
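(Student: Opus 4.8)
The plan is to test base-point freeness one torus-fixed point at a time. Since $\F_{\Xi}$ is spanned by the torus-semi-invariant monomials $x^m$, its base locus in $\mathcal X_\Sigma$ is invariant under the residual torus $T \cong \gm^{\dim N}$; being closed, if this base locus were nonempty it would contain a $T$-fixed point. Hence it suffices to show that no $T$-fixed point is a base point. The $T$-fixed points are indexed by the maximal cones $\sigma \in \Sigma$: in Cox coordinates $p_\sigma$ is represented by the point with $x_\rho = 0$ for $\rho \in \sigma(1)$ and $x_\rho = 1$ otherwise. Since $\langle m, u_\rho\rangle + 1 \geq 0$ for every $m \in \nabla_\Sigma^\circ$, evaluating the monomial and using the convention $0^0 = 1$ yields the clean criterion
\[
x^m(p_\sigma) \neq 0 \iff \langle m, u_\rho\rangle = -1 \text{ for all } \rho \in \sigma(1).
\]
So for each maximal cone $\sigma$ I must exhibit a lattice point $m$ of the linear system with $\langle m, u_\rho\rangle = -1$ on every ray of $\sigma$.

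This is where reflexivity enters. Because $\nabla_\Sigma$ is reflexive, every facet $F \subseteq \nabla_\Sigma$ lies on a hyperplane $\{\langle m_F, -\rangle = -1\}$ with $m_F \in M$, and $m_F$ is exactly a vertex of the polar polytope $\nabla_\Sigma^\circ$; conversely, every vertex of $\nabla_\Sigma^\circ$ arises this way. Thus $\langle m_F, u_\rho\rangle = -1$ precisely when $u_\rho \in F$ (and $\geq -1$ otherwise), and it is these vertex monomials $x^{m_F}$ that the hypothesis supplies. Granting that the ray generators $\{u_\rho : \rho \in \sigma(1)\}$ of each maximal cone lie on a common facet $F$, the corresponding vertex $m_F$ of $\nabla_\Sigma^\circ$ satisfies $x^{m_F}(p_\sigma) \neq 0$ by the criterion above, so no $p_\sigma$ is a base point and we conclude. (Proposition~\ref{prop: Bertini} then applies to produce a smooth connected generic member.)

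The main obstacle is the claim that each maximal cone sits over a single facet, i.e.\ that $\Sigma$ refines the normal fan of $\nabla_\Sigma^\circ$. Reflexivity already forces every $u_\rho$ onto $\partial\nabla_\Sigma$, since the origin is the unique interior lattice point. If the generators of $\sigma$ span a simplex meeting $\partial\nabla_\Sigma$ in its relative interior, the claim follows cleanly: choosing a supporting hyperplane $\langle a, -\rangle \leq b$ of $\nabla_\Sigma$ equal to $b$ at an interior convex combination $c = \sum_i \lambda_i u_{\rho_i}$ with all $\lambda_i > 0$ forces $\langle a, u_{\rho_i}\rangle = b$ for every $i$, and since the spanned face is then $(\dim N - 1)$-dimensional it is a facet. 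The delicate point is excluding ``caving'' cones whose spanned simplex dips into the interior of $\nabla_\Sigma$ and meets no facet; such a configuration would in fact create a genuine base point at $p_\sigma$, so it must be ruled out. Establishing that the reflexive, projective fan genuinely refines the face fan of $\nabla_\Sigma$ is the step I expect to require the most care, and is where reflexivity (all generators on $\partial\nabla_\Sigma$) and projectivity (coherence of the induced subdivision) must be combined.
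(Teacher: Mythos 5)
Your reduction to torus-fixed points and the vanishing criterion $x^m(p_\sigma)\neq 0$ if and only if $\langle m,u_\rho\rangle=-1$ for all $\rho\in\sigma(1)$ are both correct, and you have correctly isolated the single claim on which the whole argument rests: that every maximal cone of $\Sigma$ lies in the cone over one facet of $\nabla_\Sigma$, i.e.\ that $\Sigma$ refines the face fan of $\nabla_\Sigma$. That claim is exactly the gap, and it cannot be closed: the ``caving'' cones you worry about genuinely occur, and projectivity does not exclude them. Take $N=\Z^3$ and $\nabla_\Sigma=[-1,1]^3$ (reflexive), with ray set the eight vertices of the cube together with $w_1=(1,0,0)$ and $w_2=(0,1,0)$. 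The unimodular cone $\sigma=\op{Cone}(w_1,w_2,(1,1,1))$ is strongly convex, contains no cube vertex other than $(1,1,1)$, and its generators lie on no common facet ($w_1$ lies only on $\{x=1\}$, $w_2$ only on $\{y=1\}$). One checks that $\sigma$ extends to a complete simplicial fan with exactly these ten rays (cone the facets $\{x=1\}$ and $\{y=1\}$ from their centers, split the remaining facets by a diagonal, and flip the wall over the cube edge $\{x=y=1\}$ so as to create $\sigma$ and $\op{Cone}(w_1,w_2,(1,1,-1))$), and that this fan admits a strictly convex support function, so $X_\Sigma$ is projective. The unique $m$ with $\langle m,u\rangle=-1$ on all three generators of $\sigma$ is $(-1,-1,1)$, which does not lie in $\nabla_\Sigma^\circ$ (the octahedron $\sum_i|m_i|\leq 1$); by your own criterion $p_\sigma$ is then a base point of the entire anti-canonical system, not just of the subsystem spanned by the vertices of $\nabla_\Sigma^\circ$.

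So the direct, fixed-point-by-fixed-point argument can only succeed when $\Sigma$ does refine the face fan of $\nabla_\Sigma$ --- in particular for the normal fan of $\nabla_\Sigma^\circ$, where your facet argument is precisely Propositions 6.1.10 and 6.1.1 of \cite{CLS}. That special case is in fact all the paper proves directly: its proof treats only the normal fan of $\nabla_\Sigma^\circ$ and then passes to a general simplicial $\Sigma$ with the same $\nabla_\Sigma$ via the derived equivalence of Theorem~\ref{thm: no resolution}, transporting smoothness of the generic member rather than basepoint-freeness itself. Your criterion is the right tool and your diagnosis of where the difficulty lies is accurate, but the missing combinatorial lemma is false as stated, so either an added hypothesis (that $\Sigma$ refines the face fan of $\nabla_\Sigma$) or a transfer device like the paper's is unavoidable.
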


\begin{proof}
Consider the special case where $\Sigma$ is the normal fan of $\nabla^*_{\Sigma}$.  Note that any facet $F_\sigma$ of $\nabla_{\Sigma}$ is the convex hull $\op{Conv}(u_\rho \ | \ \rho \in \sigma(1))$ for some maximal cone $\sigma$. Since $\nabla_{\Sigma}$ is reflexive,   there is a lattice point $m_{\sigma}\in \nabla_{\Sigma}^*$ so that $\langle m_\sigma, f\rangle = -1$ for all $f \in F_\sigma$ hence $\langle m_\sigma, u_\rho\rangle = -1$ for all $\rho \in \sigma(1)$. By Proposition 6.1.1 of \cite{CLS}, since $m_\sigma \in \nabla_{\Sigma}^* = P_{-K_{X_{\Sigma}}}$, the anti-canonical divisor is basepoint free.  The general case follows: any simplicial fan $\Sigma'$ that satisfies our hypotheses corresponds to toric variety $X_{\Sigma'}$  which is a blow-up of the toric variety $X_{\Sigma}$ in our special case, and we then use the fact that the pullback of a basepoint free linear system is a basepoint free linear system.
\end{proof}

\begin{corollary}\label{cor: smooth reflexive}
Let $\Sigma_1, \Sigma_2 \subseteq N_{\R}$ be complete fans and $\Xi \subseteq \nabla_{\Sigma_1}^\circ \cap \nabla_{\Sigma_2}^\circ \cap M$ be a finite set.   
Assume that $X_{\Sigma_1}, X_{\Sigma_2}$  are projective and that $\op{Conv}(\Xi)$ is reflexive.
Let $\widetilde{\Sigma_{\op{Conv}(\Xi)}}$ be a simplicial resolution of the normal fan to $\op{Conv}(\Xi)$.

Then for a generic choice of coefficients
\[
w := \sum_{m \in \Xi} c_m x^{m}
\]
the following categories are equivalent
\[
\dbcoh{\mathcal Z_{w, \widetilde{\Sigma_{\op{Conv}(\Xi)}}}}  \cong \dbcoh{\widetilde{\mathcal Z_{w, \Sigma_1}}} \cong \dbcoh{\widetilde{\mathcal Z_{w, \Sigma_2}}},
\]
where $\widetilde{\mathcal Z_{w, \Sigma_i}}$ is a smooth stacky crepant resolution of the hypersurface defined by $w$ in the stack $\mathcal X_{\Sigma_i}$.  Moreover, $Z_{w, \Sigma_1},Z_{w, \Sigma_2}, Z_{w, \Sigma_{\op{Conv}(\Xi)}}$  are all birational.   
\end{corollary}

\begin{proof}
Since $\op{Conv}(\Xi)$ is reflexive, $\Xi$ is good.  Hence we may apply
Theorem~\ref{thm: main theorem} for the comparison between the $\widetilde{\mathcal Z_{w, \Sigma_i}}$.   The adjunction formula tells us that $\widetilde{\mathcal Z_{w, \Sigma_1}}, \widetilde{\mathcal Z_{w, \Sigma_2}} $ are smooth Deligne-Mumford stacks with trivial canonical bundle, hence the resolutions are crepant.  

To compare with $\mathcal Z_{w, \widetilde{\Sigma_{\op{Conv}(\Xi)}}}$, notice that it is smooth and connected by Propositions~\ref{prop: Bertini} and~\ref{prop: reflexive}.  Hence, we may apply
Corollary~\ref{cor:SubsetLinSyst}.
\end{proof}

We now will give a criteria for when the generic stacky anti-canonical hypersurface is smooth in a finite quotient of weighted projective space.

\begin{definition}
Given a subset $I\subseteq \{0,\ldots, n\}$, we say that a monomial $p \in \kappa[x_0,\ldots, x_n]$ is an \newterm{$I$-root} if $p = \prod_{i\in I} x_i^{r_i}$  for some $r_i \in \mathbb{Z}_{>0}$. We say that $p$ is an \newterm{$I$-pointer} if $p = x_j\prod_{i\in I} x_i^{r_i}$ for some $r_i \in \mathbb{Z}_{>0}$ and $j \notin I$.
\end{definition}

\begin{theorem}[Fletcher]
Let $X_\Sigma = \P^n(a_0,\ldots,a_n)/G$ where $G$ is a finite abelian group acting multiplicatively.
The generic stacky member $\mathcal Z_{w, \Sigma}$ of an anti-canonical linear system $F_\Xi$  in $X_\Sigma$ is smooth if for every nonempty $I \subseteq \{0, ..., n \}$ there exists an $I$-root or an $I$-pointer in $\Xi$.  
\label{thm: quasismooth}
\end{theorem}
\begin{proof}
Theorem 8.1 of \cite{Fl00} states the same for the generic member without a finite group action.  
The proof demonstrates that the sum over these monomials is smooth.  Since smoothness is an open property of a linear system, the result follows. 
Furthermore, since smoothness is a property of polynomials in $\A^{n+1} \backslash 0$, it does not depend on whether or not we quotient by $\gm$ or $\gm \times G$.
\end{proof}

\begin{remark}
One can also compare with similar results obtained by Kreuzer and Skarke.  See Theorem 1 of \cite{KS92}.
\end{remark}

\begin{corollary}
Let $X_{\Sigma_1}, X_{\Sigma_2}$ be quotients weighted projective spaces by a finite groups and
$\Xi \subseteq \nabla_{\Sigma_1}^\circ \cap \nabla_{\Sigma_2}^\circ \cap M$
be a good set. Assume that for every nonempty $I \subseteq \{0, ..., n \}$ there exists a $I$-root or an $I$-pointer in $\Xi$ for both $X_{\Sigma_1}, X_{\Sigma_2}$.  Then for a generic choice of coefficients
\[
w := \sum_{m \in \Xi} c_m x^{m}
\]
there is an equivalence of categories, $\dbcoh{\mathcal Z_{w, \Sigma_1}} \cong \dbcoh{\mathcal Z_{w, \Sigma_2}}.$  Moreover, $ Z_{w, \Sigma_1}, Z_{w, \Sigma_2}$ are birational.
\end{corollary}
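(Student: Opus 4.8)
The plan is to feed the combinatorial smoothness criterion of Corollary~\ref{cor: quasismooth} into Theorem~\ref{thm: main theorem}, and then to observe that smoothness of the generic member makes the partial resolution redundant, which is what allows the bars to be dropped.

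First I would verify the hypotheses of Corollary~\ref{cor: quasismooth} for each $X_{\Sigma_i}$. Since $X_{\Sigma_1}$ and $X_{\Sigma_2}$ are projective, each fan $\Sigma_i$ is complete, so its rays span $N_{\R}$; hence $X_{\Sigma_i}$ has no torus factor, and being projective of positive dimension it is not affine. The hypothesis that every nonempty $I \subseteq \{0,\dots,n\}$ admits an $I$-root or an $I$-pointer in $\Xi$ is a condition on $\Xi$ alone, so it applies equally to $\Sigma_1$ and $\Sigma_2$. Thus the first assertion of Corollary~\ref{cor: quasismooth} provides, for each $i$, a dense open set $U_i$ in the space of coefficients $(c_m)_{m \in \Xi}$ over which $\mathcal Z_{w,\Sigma_i}$ is smooth. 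Choosing $w$ with coefficients in the nonempty open $U_1 \cap U_2$, I may arrange that both $\mathcal Z_{w,\Sigma_1}$ and $\mathcal Z_{w,\Sigma_2}$ are smooth simultaneously.

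Next I would apply Theorem~\ref{thm: main theorem} to this $w$, which yields partial crepant resolutions $\overline{\mathcal Z_{w,\Sigma_1}}, \overline{\mathcal Z_{w,\Sigma_2}}$ together with an equivalence $\dbcoh{\overline{\mathcal Z_{w,\Sigma_1}}} \cong \dbcoh{\overline{\mathcal Z_{w,\Sigma_2}}}$. The crucial step is then to identify each $\overline{\mathcal Z_{w,\Sigma_i}}$ with $\mathcal Z_{w,\Sigma_i}$ itself. The resolution is cut out inside the toric star subdivision $\mathcal X_{\Sigma_i^{\op{star}}} \to \mathcal X_{\Sigma_i}$ of Proposition~\ref{prop: resolve}, and the induced map $\overline{\mathcal Z_{w,\Sigma_i}} \to \mathcal Z_{w,\Sigma_i}$ is proper and birational; by adjunction both its source and its (anti-canonical) target carry trivial canonical sheaf, so it is crepant, and a crepant proper birational morphism onto a smooth, hence terminal, target can have no exceptional divisor and must be an isomorphism. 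This is exactly the reduction invoked in the proof of Corollary~\ref{cor:SubsetLinSyst}, the only new feature being that I no longer assume the $\Sigma_i$ simplicial. It upgrades the equivalence of resolutions to the desired $\dbcoh{\mathcal Z_{w,\Sigma_1}} \cong \dbcoh{\mathcal Z_{w,\Sigma_2}}$.

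Finally, for birationality I would produce a variable not dividing $w$. Applying the hypothesis to a singleton $I = \{i_0\}$ yields in $\Xi$ either a pure power $x_{i_0}^{e}$ or a pointer $x_{i_0}^{e} x_j$ with $j \neq i_0$; in either case (the ambient dimension being at least two in the cases of interest) this monomial fails to be divisible by some variable $x_k$, so the generic $w \in F_\Xi$ is not divisible by $x_k$. Theorem~\ref{thm: main theorem} then gives that $Z_{w,\Sigma_1}$ and $Z_{w,\Sigma_2}$ are birational. The step I expect to be the main obstacle is the identification in the previous paragraph: Theorem~\ref{thm: main theorem} only outputs an equivalence between the resolutions, so one must argue carefully that a smooth member genuinely coincides with its partial crepant resolution rather than merely being dominated by it, i.e.\ that no exceptional divisor survives over a smooth Calabi--Yau target.
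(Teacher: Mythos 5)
Your proposal follows the paper's own route exactly: generic smoothness of both $\mathcal Z_{w,\Sigma_i}$ via Corollary~\ref{cor: quasismooth}, followed by the reduction to Theorem~\ref{thm: main theorem} through Corollary~\ref{cor:SubsetLinSyst}, whose one-line proof you simply unfold. The additional justifications you supply (intersecting the two dense open loci of smoothness, the crepant-morphism-onto-a-smooth-target argument for identifying a smooth member with its partial resolution, and the singleton-$I$ argument producing a variable not dividing $w$ for the birationality claim) only make explicit what the paper's two-sentence proof leaves implicit.
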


\begin{proof}
For a generic choice of coefficients, $\mathcal Z_{w, \Sigma_1}, \mathcal Z_{w, \Sigma_2}$ are smooth by Theorem~\ref{thm: quasismooth}.  The result follows from Corollary~\ref{cor:SubsetLinSyst}.
\end{proof}

 \subsection{Equivalence in Reid's List of 95 Families}\label{ReidsList}
 
In \cite{KM12}, Kobayashi and Mase find birational correspondences between members of Reid's list of all 95 families of K3 surfaces in weighted-projective space whose generic anti-canonical hypersurface has Gorenstein singularities.  Here, we will recover these birational correspondences by considering the complete linear systems $\nabla_{\Sigma}^\circ \cap M$ for each $X_\Sigma$ on Reid's list (see Equation~\eqref{defn:nabla}).  

 \begin{corollary}\label{reidscor}
Suppose $X_{\Sigma}=\P^3(a_1,a_2,a_3,a_4)$ is a weighted-projective space on Reid's list.  The polytope $\Delta := \op{Conv}(\nabla_{\Sigma}^\circ \cap M)$ is reflexive and $Z_{w, \Sigma}$ and $Z_{w, \widetilde{\Sigma_\Delta}}$ are birational if $w \in \F_{\nabla_{\Sigma}^\circ \cap M}$ is irreducible, where $\widetilde{\Sigma_\Delta}$ is a simplicial resolution of the normal fan to $\Delta$.  Hence, generically, the minimal resolutions of $Z_{w, \Sigma}$ and $Z_{w, \widetilde{\Sigma_\Delta}}$ are isomorphic K3 surfaces.  Finally, generically,
the stacks $\mathcal{Z}_{w, \Sigma}$ and $\mathcal{Z}_{w, \widetilde{\Sigma_\Delta}}$ are smooth and there is an equivalence of categories, $
\dbcoh{\mathcal Z_{w, \Sigma}} \cong \dbcoh{\mathcal Z_{w, \widetilde{\Sigma_\Delta}}}.$ \end{corollary}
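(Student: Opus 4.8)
The plan is to apply Corollary~\ref{cor: smooth reflexive} to the complete anti-canonical system, taking $\Xi := \nabla_\Sigma^\circ \cap M$ so that $\op{Conv}(\Xi) = \Delta$ by definition, with $\Sigma_1 = \Sigma$ and $\Sigma_2 = \Sigma_\Delta$. The essential input is the reflexivity of $\Delta$. As $\Delta$ is by construction a lattice polytope containing $0$ in its interior, reflexivity is the condition that the polar $\Delta^\circ$ is again a lattice polytope, equivalently that $X_{\Sigma_\Delta}$ is a Gorenstein Fano toric variety. This is precisely the polytopal translation of the defining feature of Reid's list --- that the generic anti-canonical hypersurface has only Gorenstein (du Val) singularities and so admits a crepant resolution to a K3 surface --- and for each of the $95$ weighted projective $3$-spaces it can be confirmed directly from Table~\ref{ReidTable}. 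I consider this the crux of the corollary.

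Granting reflexivity, the hypotheses of Corollary~\ref{cor: smooth reflexive} are met: the inclusion $\Xi \subseteq \nabla_\Sigma^\circ \cap M$ is definitional, while reflexivity gives $\nabla_{\Sigma_\Delta} = \Delta^\circ$ and hence $\nabla_{\Sigma_\Delta}^\circ = \Delta \supseteq \Xi$, so that $\Xi \subseteq \nabla_{\Sigma_\Delta}^\circ \cap M$ as well; both $X_\Sigma$ and $X_{\Sigma_\Delta}$ are projective; and $\op{Conv}(\Xi) = \Delta$ is reflexive. The corollary then yields, for generic $w$, a derived equivalence $\dbcoh{\overline{\mathcal Z_{w,\Sigma}}} \cong \dbcoh{\mathcal Z_{w,\Sigma_\Delta}}$ in which $\mathcal Z_{w,\Sigma_\Delta}$ is already a smooth connected Calabi--Yau stack. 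The birationality of $Z_{w,\Sigma}$ and $Z_{w,\Sigma_\Delta}$, under the stated hypothesis that $w$ be not divisible by $x_i$ for some $i$, is the corresponding clause of Theorem~\ref{thm: main theorem} applied to the pair $(\Sigma,\Sigma_\Delta)$.

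To pass from the resolution $\overline{\mathcal Z_{w,\Sigma}}$ to $\mathcal Z_{w,\Sigma}$ itself, I would show the latter is already smooth, so that the partial desingularization is an isomorphism, exactly as in the proof of Corollary~\ref{cor:SubsetLinSyst}. This is the quasi-smoothness of the $95$ families: the du Val condition at the cyclic-quotient (orbifold) points of $\P(a_0:\dots:a_3)$ lying on the generic hypersurface is the statement that the affine cone of $w$ is smooth away from the origin, i.e. that the stacky hypersurface $\mathcal Z_{w,\Sigma}$ is smooth; compare Theorem~\ref{thm: quasismooth} and Corollary~\ref{cor: quasismooth}. With $\overline{\mathcal Z_{w,\Sigma}} = \mathcal Z_{w,\Sigma}$ the derived equivalence descends to $\dbcoh{\mathcal Z_{w,\Sigma}} \cong \dbcoh{\mathcal Z_{w,\Sigma_\Delta}}$, which gives the final assertion and the smoothness of both stacks.

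For the coarse spaces, the generic $Z_{w,\Sigma}$ and $Z_{w,\Sigma_\Delta}$ are connected surfaces with trivial dualizing sheaf and at worst du Val singularities, so their minimal resolutions are smooth K3 surfaces; since these resolutions are birational by the preceding step, and birational smooth K3 surfaces are isomorphic --- any birational map between minimal surfaces of Kodaira dimension zero is biregular --- the two minimal resolutions are isomorphic K3 surfaces. The single genuine obstacle is the reflexivity of $\Delta$: it is the one place where the Gorenstein hypothesis that cuts out Reid's list is actually used, and, lacking a uniform conceptual argument, it is most safely certified by the explicit data recorded in the table.
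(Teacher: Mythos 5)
Your proposal is correct and follows essentially the same route as the paper: the paper's proof is exactly ``verify reflexivity of $\Delta$ and smoothness of the generic stacky member exhaustively (via Macaulay2 and Theorem~\ref{thm: quasismooth}), then apply Corollary~\ref{cor: smooth reflexive},'' which is what you do, with the added (correct) details on why $\overline{\mathcal Z_{w,\Sigma}}=\mathcal Z_{w,\Sigma}$ and why birational minimal K3 resolutions are isomorphic. Your instinct that reflexivity has no uniform conceptual proof and must be certified case-by-case from the table matches the paper's exhaustive computational verification.
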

 
 \begin{proof}
 Proof of reflexivity and smoothness is exhaustive using Macaulay 2 \cite{Mac2} and Theorem~\ref{thm: quasismooth}.\footnote{The Macaulay2 code is available at  \href{http://www.ualberta.ca/~favero/code.html}{www.ualberta.ca/\~{}favero/code.html} }   
The rest follows directly from Corollary~\ref{cor: smooth reflexive}.
 \end{proof}

The weighted projective space $X_\Sigma = \P^3(a_1,a_2,a_3,a_4)$ is Gorenstein if and only if  $a_i$ divides $\sum a_i$ for all $i$.  This is also equivalent to $\nabla_\Sigma$ being reflexive, in which case $\Delta = \nabla_\Sigma^\circ$.  Only the first 14 weighted-projective spaces on Reid's list are Gorenstein.  The remaining 81 non-Gorenstein weighted-projective spaces have generic anti-canonical hypersurfaces which are  birational to generic anti-canonical hypersurfaces in a  (different) Gorenstein Fano toric variety.  After crepantly resolving, the associated stacks are derived equivalent and generically smooth.

There are several weighted-projective 3-spaces that correspond to the same $\Delta$. Using the indexing of reflexive polytopes in \cite{Sage}, we organize these cases into the following table:

\begin{center}\begin{tabular}{c|c}
Reid's Families & $\op{index}(\Delta)$ \\ \hline
14, 28, 45, 51 & 4080 \\
20, 59 & 3045 \\
26, 34 & 1114 \\
27, 49 & 1949 \\
38, 77 & 3731 
\end{tabular}
\qquad\qquad
\begin{tabular}{c|c}
Reid's Families & $\op{index}(\Delta)$ \\ \hline
43, 48 & 745 \\
46, 65, 80 & 88 \\
50, 82 & 4147 \\
56, 73 & 2 \\
68, 83, 92 & 221 \\
\end{tabular}
\end{center}
In particular, all families in the same row are pointwise birational and derived equivalent after resolving.

\begin{remark}
There are, in fact, 104 weighted projective 3-spaces such that the generic member of the complete anti-canonical linear system $\mathcal Z_{w, \Sigma}$ is smooth.  Indeed, in dimension 2 and 3 this is equivalent to $\op{Conv}(\Xi)$ being reflexive where $F_\Xi$ is the complete anti-canonical linear system. In higher dimensions this is no longer true.  An excellent summary of the intricate relationship between polytopes and generic smoothness can be found in Section 2.4 of \cite{ACG16}.
\end{remark}

\subsection{Some Normal Forms in $\P^3$}\label{normalforms}
 
Let $X_\Sigma$ be a $3$-dimensional projective toric variety.  If $\Xi \subseteq \nabla_{\Sigma}^\circ \cap M$ is a finite set and $\op{Conv}(\Xi)$ is reflexive, then, by Corollary~\ref{cor: smooth reflexive}, the generic member $Z_{w, \Sigma}$ of $\F_\Xi$ is birational to $Z_{w, \Sigma_{\op{Conv}(\Xi)}}$ where $ \Sigma_{\op{Conv}(\Xi)}$ is the normal fan to $\op{Conv}(\Xi)$.  Therefore, the resolutions are isomorphic.  In particular, the Picard lattices of the resolutions are isomorphic.

Recall that, for every weighted-projective space on Reid's list, there is a reflexive polytope $\Delta$ associated to it so that the polytope corresponds to the complete anti-canonical linear system.  Suppose that $\op{Conv}(\Xi)$ is isomorphic to one of the reflexive polytopes $\Delta$ corresponding to a weighted-projective space in Reid's list, i.e.,  $\op{Conv}(\Xi) = \op{Conv}(\nabla_{\Sigma'}^\circ \cap M)$ where $X_\Sigma'$ is a weighted projective space appearing Reid's list.  By applying Corollary~\ref{reidscor}, we see that $Z_{w, \Sigma}, Z_{w, \Sigma_{\op{Conv}(\Xi)}}$ are also birational to the corresponding hypersurface $Z_{w, \Sigma'}$ in the weighted projective space $X_{\Sigma'}$ on Reid's list.  Hence, for the resolutions,
$\widetilde{Z_{w, \Sigma}},\widetilde{Z_{w, \Sigma_{\op{Conv}(\Xi)}}},\widetilde{Z_{w, \Sigma'}}$ we can describe the Picard lattices
\[
\op{Pic}(\widetilde{Z_{w, \Sigma}}) = \op{Pic}(\widetilde{Z_{w, \Sigma_{\op{Conv}(\Xi)}}}) = \op{Pic}(\widetilde{Z_{w, \Sigma'}}),
\]
by invoking Belcastro's computation of Picard ranks for generic K3 surfaces in each weighted-projective space on Reid's list \cite{Bel02}.  

Now, consider the special case where $\Sigma$ is the normal fan to a standard $3$-dimensional simplex so that $X_\Sigma = \P^3$.  In this case, we consider linear systems $\F_\Xi$ spanned by quartic monomials in $\P^3$.  Up to taking convex hull of $\Xi \subseteq \nabla_{\Sigma}^\circ \cap M$, there are 429 such choices of linear systems of quartics so that $\op{Conv}(\Xi)$ is isomorphic to some $\Delta$ corresponding to a family on Reid's list.  These 429 choices cover exactly 52 of the 95 families in Reid's list and give 44 distinct Picard lattices.\footnote{The Macaulay2 code and full lists can be found at  \href{http://www.ualberta.ca/~favero/code.html}{www.ualberta.ca/\~{}favero/code.html} }   

In conclusion, using Corollary~\ref{cor: smooth reflexive}, we can find 429 linear systems of quartics in $\P^3$ for which we can describe the Picard lattice of a resolution of the generic member.   In total, these linear systems provide 44 distinct Picard lattices.  

\begin{remark}
Up to taking convex hull, there are 20260 linear systems of quartics in $\P^3$ such that $\op{Conv}(\Xi)$ is 3-dimensional and reflexive.  Exactly 3615 of the 4319 3-dimensional reflexive polytopes occur in such a way.\footnote{The Sage code and full lists can be found at  \href{http://www.ualberta.ca/~favero/code.html}{www.ualberta.ca/\~{}favero/code.html} }  The corresponding Picard lattice is described in \cite{Roh04} but nowhere are they listed in terms of the classification of symmetric unimodular lattices in the $K3$ lattice.
\end{remark}

\begin{example}
\label{ex: M2}
Consider the set 
\[
 \Xi = \{ (-1,-1,-1), (1,0,0),(0,1,0),(0,0,1),(0,0,0)\}
\]
and the corresponding linear system
 \[
 \F_{\mathbf{c}} = c_1 x^2yz + c_2 xy^2z + c_3 xyz^2 + c_4 w^4 + c_5 xyzw
 \]
 of quartics in $\P^3$. 
 
Then $\op{Conv}(\Xi) = \Delta_{0}$ is the reflexive polytope of index 0 in Sage.  Hence, a resolution of a generic member of this family is polarized by $(E_8 \oplus E_8 \oplus \langle -4\rangle \oplus H)$ since this appears as Reid Family 52 in the weighted-projective space $\P^3(7,8,9,12)$.
\end{example}

\begin{example}
Consider the set 
\[
\Xi = \{ (-1,-1,-1), (-1,-1,0),(0,0,0),(-1,1,0),(2,-1,0),(-1,-1,1)\}
\]
and the corresponding linear system
 \[
 \F_{\mathbf{c}} = c_1 w^4 + c_2 w^3z + c_3 xyzw + c_4y^2wz +c_5 x^3z + c_6w^2z^2
 \]
 of quartics in $\P^3$. 
 
Then $\op{Conv}(\Xi) = \Delta_{88}$ is the reflexive polytope of index 88 in Sage.  Hence, a resolution of a generic member of this family is polarized by $(E_8 \oplus E_8\oplus H)$ since this appears as Reid Families 46, 65, and 80 in the weighted-projective spaces $\P^3(5,6,22,33)$, $\P^3(3,5,11,14)$, and $\P^3(4,5,13,22)$, respectively.
\end{example}

\begin{example}
In this example we produce a family of $(E_8 \oplus E_7 \oplus H)$-polarized K3 surfaces in a blow-up of $\P^3$.  This was achieved independently by Clingher and Doran \cite{CD12} and Vinberg \cite{Vin13} using non-toric methods.  

Consider the set 
\[
\Xi = \{ (-1,-1,-1), (-1,-1,0),(0,0,0),(-1,1,0),(2,-1,0),(-1,-1,1),(0,-1,1)\}
\]
and the corresponding linear system
 \[
 \F_{\mathbf{c}} = c_1 w^4 + c_2 w^3z + c_3 xyzw + c_4y^2wz +c_5 x^3z + c_6w^2z^2 + c_7 xz^2w
 \]
 of quartics in $\P^3$.
 
Then $\op{Conv}(\Xi) = \Delta_{221}$ is the reflexive polytope of index 221 in Sage.  Hence, a resolution of a generic member of this family is polarized by $(E_8 \oplus E_7 \oplus H)$ since this appears as Reid Families 68, 83, and 92 in the weighted-projective spaces $\P^3(3,4,10,13)$, $\P^3(4,5,18,27)$, and $\P^3(3,5,11,19)$, respectively.
\end{example}

\begin{remark}
Bruzzo and Grassi (Theorem 3.8 of \cite{BG12}) prove that, given the family $\F$ of anti-canonical hypersurfaces in a complete, simplicial toric variety $X_\Sigma$, the Picard rank of a very general hypersurface $X \in \F$ is the Picard rank of  $X_{\Sigma}$. It is an interesting problem to find subloci of hypersurfaces of higher Picard rank than $X_{\Sigma}$. While our resolutions are of higher Picard rank, the hypersurfaces themselves are not, a priori, in the Noether-Lefschetz locus as they are sometimes not smooth.
\end{remark}

\subsection{Equivalences of Exotic Mirror Constructions}\label{MirrorConstructions}

\subsubsection{Artebani-Comparin-Guilbot Mirror Symmetry}
We now prove Theorem~\ref{EquivMirrorsHypers}.
The following definition comes form \cite{ACG16}.
\begin{definition}
Let $\Delta_1 \subseteq \Delta_2$ be polytopes in $M_{\Q}$.  We say that $(\Delta_1, \Delta_2)$ is a \newterm{good pair} if $\Delta_1$ and $\Delta_2^*$ are lattice polytopes with a unique interior lattice point. 
\end{definition}

If $(\Delta_1, \Delta_2)$ is a good pair, then we can define a family of Calabi-Yau hypersurfaces by looking at the special linear system associated to $\Xi = \Delta_1 \cap M$ corresponding to anticanonical sections in the toric variety $X_{\Sigma_{\Delta_2}}$. Note that $\nabla_{\Sigma_{\Delta_2}} = \Delta_2^*$ by the construction of the anticanonical polytope $\Delta_2$.  Notice that if $(\Delta_1, \Delta_2)$ is a good pair, then so is $(\Delta_2^*, \Delta_1^*)$. In this situation, we may regard the lattice points of $\Delta_2^*$ as a linear system $\Xi^*$ on $X_{\Sigma_{\Delta_1^*}}$ where $\nabla_{\Sigma_{\Delta_1^*}} = \Delta_1$. Artebani, Comparin, and Guilbot use this to propose the following mirror duality
\[
Z_{w, \Sigma_{\Delta_2}} \longleftrightarrow  Z_{\hat w, \Sigma_{\Delta_1^*}} 
\]
where $w$ is a generic member of the linear system $\op{Conv}(\Delta_2^*) \cap N$ and $\hat w$ is a generic member of the linear system $\op{Conv}(\Delta_1) \cap M$. 

Fixing $\Delta_2$ and choosing two good pairs $(\Delta_1, \Delta_2)$ and $(\Delta_1', \Delta_2)$, amounts to being able to choose different hypersurfaces in $X_{\Sigma_{\Delta_2}}$ which are symplectomorphic.   Hence, mirror symmetry predicts that the $B$-model of the mirror remains the same.  This phenomenon is realized via derived categories.

 \begin{corollary}\label{ACGthm}
 Let $(\Delta_1', \Delta_2)$ and $(\Delta_1, \Delta_2)$ be two good pairs. Then for a generic choice of 
 \[
\widehat w := \sum_{m \in \op{Conv}(\Delta_2^*) \cap M} c_m x^m,
 \]
 the two corresponding mirror coarse moduli spaces
$Z_{\widehat w, \Sigma_{\Delta_1}}, Z_{\widehat w, \Sigma_{\Delta_1'}}$
 are birational and the partial resolutions $\widetilde{\mathcal{Z}_{\widehat w, \Sigma_{\Delta_1}}}$ and $\widetilde{\mathcal{Z}_{\widehat w, \Sigma_{\Delta_1'}}}$ are derived equivalent. Moreover, if both ${\mathcal{Z}_{\widehat w, \Sigma_{\Delta_1}}}$ and ${\mathcal{Z}_{\widehat w, \Sigma_{\Delta_1'}}}$ are smooth, then they are derived equivalent.
 \end{corollary}
 \begin{proof}
 
 Set $\Xi = \op{Conv}(\Delta_2^*) \cap M$.
Since $\Delta_2^*$ has a unique interior lattice point, $\Xi$ is good. The polytopes $\Delta_1$ and $\Delta_1'$ have a unique interior point and $\nabla_{\Sigma_{\Delta_1}} = \Delta_1$ and $\nabla_{\Sigma_{\Delta_1'}} = \Delta'_1$, hence $X_{\Sigma_{\Delta_1}}$ and $X_{\Sigma_{\Delta_1'}}$ have nef anticanonical bundles. The result now follows from Theorem~\ref{thm: main theorem} and Corollary~\ref{cor:SubsetLinSyst}, depending on the smoothness of the stacks.
 \end{proof}
 
 \begin{remark}
 The birational aspect of this result was first proven as Proposition 3.5 in \cite{ACG16}.  
 \end{remark}

 \subsubsection{Clarke Mirror Symmetry}
 While Clarke's construction was originally created in the Landau-Ginzburg setting, we will specialize to the Calabi-Yau case and give an interpretation along the lines of \cite{ACG16}. In \cite{Cla08}, Clarke proves his construction is a generalization of the Batyrev and Berglund-H\"ubsch-Krawitz constructions. As always, $M$ and $N$ are dual lattices and $\Sigma \subseteq N_{\R}$ a fan.  We work over the field $\kappa = \C$.
Recall from Equation~\eqref{eq: f} that there is a right exact sequence:
 \begin{align}
 \label{eq: div}
M & \overset{\op{div}_\Sigma}{\longrightarrow} \Z^n \overset{\pi}{\longrightarrow} \op{Cl}(X_\Sigma) \to 0 \\
m & \mapsto \sum_{\rho \in \Sigma(1)} \langle u_\rho, m \rangle e_\rho.  \notag
\end{align}
Now consider a finite set $\Xi \subseteq \nabla_\Sigma^\circ \cap M$ with the origin in the interior of its convex hull. This gives another right exact sequence,
 \begin{align}
 \label{eq: monmap}
N & \overset{\op{mon}_\Xi}{\longrightarrow} \Z^\Xi \overset{q}{\longrightarrow} \op{coker}(\op{mon}_\Xi) \to 0 \\
n & \mapsto \sum_{m \in \Xi} \langle m, n \rangle e_m.  \notag
\end{align}

The realization of $X_\Sigma$ as a GIT quotient amounts to a choice of a $S_\Sigma$-linearization of $\O_{\mathbb A^n}$ or, equivalently, an element of  $D \in \op{Cl}(X_\Sigma)$. Indeed, the fan $\Sigma$ can be recovered from \eqref{eq: div} and $D$ as the normal fan to the polytope $(\pi \otimes_{\Z} \R)^{-1}(D \otimes_{\Z} \R) \cap \R^n_{\geq 0}$.

Similarly, a specific hypersurface with nonzero monomial coefficients in $\F_\Xi$ is a choice of coefficients $\mathbf{c} \in (\C^*)^\Xi$ which, up to reparametrization by the torus $N \otimes_{\Z} (\C^*)^\Xi$, amounts to a choice $q(\mathbf{c}) \in  \op{coker}(\op{mon}_\Xi) \otimes_{\Z} \C$.  Here, the $\Z$-module structure on $\C$ is the multiplicative structure.  

 Clarke's mirror construction simply exchanges the roles of the two right exact sequences above and the choices $D \otimes_{\Z} \C^*,  q(\mathbf{c})$. More precisely, given $X_\Sigma$ and an equivalence class $q(\mathbf{c})$ in a linear system $\Xi$, we get two stacks,
 \[
 \mathcal Z_{q(\mathbf{c}), \Sigma}, \mathcal Z_{D \otimes_{\Z} \C^*, \Sigma_{\Xi, q(\mathbf{c})}},
 \]
 where 
$\Sigma_{\Xi, q(\mathbf{c})}$ 
is the fan associated to \eqref{eq: monmap} and $q(\mathbf{c})$, i.e., the normal fan to the polytope $(q \otimes_{\Z} \R)^{-1}(\op{im} q(\mathbf{c})) \cap \R^\Xi_{\geq 0}$.
\begin{definition}[Clarke \cite{Cla08}]
The stacks $ \mathcal Z_{q(\mathbf{c}), \Sigma}$ and $ \mathcal Z_{D \otimes_{\Z} \C^*, \Sigma_{\Xi, q(\mathbf{c})}}$ are called \newterm{mirror} to one another.
 \end{definition}
 
 \begin{remark}
To complete the symmetry, one can more generally choose the right exact sequence \eqref{eq: div} and an element $\bar{D} \in \op{Cl}(X_\Sigma) \otimes_{\Z} \C$.  This amounts to a choice of $B$-field on $X_\Sigma$ \cite{Cla08}.
 \end{remark}
 
 \begin{remark}
Since we only choose an equivalence class of the defining polynomial $w$, the stacks above are only defined as subsets of $\mathcal X_\Sigma, \mathcal X_{\Sigma_{\Xi}}$ up to reparametrization by $\gm^{\Sigma(1)}, \gm^\Xi$.
 \end{remark}

 If we choose different anti-canonical linear systems $\Xi_1, \Xi_2$ and nonzero coefficients 
\[
\mathbf{c}_i : \Xi_i \to \C^*,
\]
then we get two smooth stacky Calabi-Yau hypersurfaces  $\mathcal Z_{q(\mathbf{c}_1), \Sigma}$ and $\mathcal Z_{q(\mathbf{c}_2), \Sigma}$ in $\mathcal X_\Sigma$.  While this varies only the complex structure of the Calabi-Yau hypersurface, the mirrors will lie in different toric varieties, which gives us the same phenomenon as before. We show this with a few extra assumptions on the toric variety $X_{\Sigma}$ and linear systems $\Xi_i$:
 \begin{corollary}
 \label{cor: Clarke}
Let $X_{\Sigma}$ be a projective variety, $\nabla_{\Sigma}\cap N$ a good set, and $\Xi_1, \Xi_2 \subseteq \nabla_{\Sigma}^\circ$
 finite sets such that $\op{Conv}(\Xi_i)$ is full dimensional with the origin in its interior.  
Choose coefficients $\mathbf{c}_i : \Xi_i \to \C^*$ and consider the stacky hypersurfaces  $\mathcal Z_{q(\mathbf{c}_1), \Sigma}, \mathcal Z_{q(\mathbf{c}_2), \Sigma} \subseteq \mathcal X_\Sigma$. Then the two corresponding mirror coarse moduli spaces $Z_{D \otimes_{\Z} \C^*, \Sigma_{\Xi_1, q(\mathbf{c}_1)}}, Z_{D \otimes_{\Z} \C^*, \Sigma_{\Xi_2, q(\mathbf{c}_2)}}$
 are birational. Moreover, the partial resolutions $\widetilde{\mathcal Z_{D \otimes_{\Z} \C^*, \Sigma_{\Xi_1, q(\mathbf{c}_1)}}}$ and $\widetilde{\mathcal Z_{D \otimes_{\Z} \C^*, \Sigma_{\Xi_2, q(\mathbf{c}_2)}}}$ are   derived equivalent.
\end{corollary}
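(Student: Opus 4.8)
The plan is to recognize the two mirror families as hypersurfaces cut out by one and the same linear system inside two toric stacks that share a common polar polytope, and then to invoke Theorem~\ref{thm: main theorem} with the roles of $N$ and $M$ interchanged. First I would unwind Clarke's construction. The mirror stack $\mathcal X_{\Sigma_{\Xi_i, q(\mathbf{c}_i)}}$ is built from the right exact sequence \eqref{eq: monmap}, $N \overset{\op{mon}_{\Xi_i}}{\longrightarrow} \Z^{\Xi_i} \overset{q}{\longrightarrow} \op{coker}(\op{mon}_{\Xi_i}) \to 0$, playing the role that \eqref{eq: div} plays for $X_\Sigma$. Hence $N$ becomes the character lattice of the mirror, its fan $\Sigma_{\Xi_i, q(\mathbf{c}_i)}$ lives in $M_{\R}$, and its fan is complete with $X_{\Sigma_{\Xi_i, q(\mathbf{c}_i)}}$ projective, being by definition a normal fan. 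Dualizing $\op{mon}_{\Xi_i}$, the functional $e_m^*$ maps to the lattice point $m$, so the ray generators of $\Sigma_{\Xi_i, q(\mathbf{c}_i)}$ point along the vectors $m \in \Xi_i$; consequently $\nabla_{\Sigma_{\Xi_i, q(\mathbf{c}_i)}}$ is the convex hull of the primitive vectors along $\op{Conv}(\Xi_i) \subseteq M_{\R}$.

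Next I would pin down the defining data of the two mirror hypersurfaces. By the symmetry of Clarke's construction, each mirror is cut out by the fixed element $D \in \op{Cl}(X_\Sigma)$ through the sequence \eqref{eq: div}, whose index set $\Z^n = \Z^{\Sigma(1)}$ is enumerated by the rays of $\Sigma$. Thus both mirror hypersurfaces are governed by the single linear system
\[
\Xi' := \{\, u_\rho \mid \rho \in \Sigma(1) \,\} \subseteq N
\]
together with the same coefficients coming from $D$, and crucially neither $\Xi'$ nor $D$ depends on $i$. The heart of the argument is then to verify the hypothesis of Theorem~\ref{thm: main theorem}, that $\Xi' \subseteq \nabla_{\Sigma_{\Xi_1, q(\mathbf{c}_1)}}^\circ \cap \nabla_{\Sigma_{\Xi_2, q(\mathbf{c}_2)}}^\circ \cap N$. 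This is exactly the self-duality packaged into the hypothesis $\Xi_i \subseteq \nabla_\Sigma^\circ$: for every $m \in \Xi_i$ and every $\rho \in \Sigma(1)$ one has $\langle m, u_\rho \rangle \geq -1$, an inequality symmetric in its two arguments. Fixing $u_\rho$ and letting $m$ range says precisely that $u_\rho \in \op{Conv}(\Xi_i)^\circ = \nabla_{\Sigma_{\Xi_i, q(\mathbf{c}_i)}}^\circ$; the same estimate survives replacing each $m$ by its primitive multiple $m'$, since $m = k m'$ forces $\langle m', u_\rho\rangle \geq -1/k \geq -1$. As this holds for both $i = 1, 2$, I obtain the required containment $\Xi' \subseteq \nabla_{\Sigma_{\Xi_1, q(\mathbf{c}_1)}}^\circ \cap \nabla_{\Sigma_{\Xi_2, q(\mathbf{c}_2)}}^\circ \cap N$.

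With the hypothesis in hand, Theorem~\ref{thm: main theorem} applies and yields the asserted derived equivalence of the partial resolutions $\overline{\mathcal Z_{D \otimes_{\Z} \C^*, \Sigma_{\Xi_1, q(\mathbf{c}_1)}}}$ and $\overline{\mathcal Z_{D \otimes_{\Z} \C^*, \Sigma_{\Xi_2, q(\mathbf{c}_2)}}}$. For birationality of the coarse moduli spaces I would invoke the divisibility clause of Theorem~\ref{thm: main theorem}, i.e.\ Proposition~\ref{prop: birational}: since the rays of $\Sigma$ positively span $N_{\R}$, the only lattice point $m_0$ with $\langle m_0, u_\rho\rangle \geq 0$ for all $\rho$ is $m_0 = 0$, so the mirror defining polynomial is not divisible by the variable indexed by any nonzero $m_0 \in \Xi_i$; such an $m_0$ exists because the linear system is nontrivial, and birationality follows. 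The one step I expect to require the most care is purely organizational: keeping the two dual sequences \eqref{eq: div} and \eqref{eq: monmap} and their associated choices $D$ and $q(\mathbf{c}_i)$ straight, and confirming that normalizing ray generators to be primitive does not disturb the polarity inequality — both handled by the symmetric pairing computation above.
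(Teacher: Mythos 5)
Your proposal is correct and follows essentially the same route as the paper: identify the mirrors as anti-canonical members of the linear system spanned by the $u_\rho$ in the two mirror toric stacks, observe that $\nabla_{\Sigma_{\Xi_i, q(\mathbf{c}_i)}} = \op{Conv}(\Xi_i)$ so that polarity of the pairing yields the containment hypothesis, and apply Theorem~\ref{thm: main theorem}. Your extra checks (primitivity of ray generators and the non-divisibility needed for the birationality clause, via completeness of $\Sigma$) are welcome details that the paper's proof leaves implicit.
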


\begin{proof}
The mirror stack $\mathcal Z_{D \otimes_{\Z} \C^*, \Sigma_{\Xi_i, q(\mathbf{c}_i)}}$
is an anti-canonical member of $F_{\nabla_{\Sigma} \cap N}$ in $X_{\Sigma_{\Xi_i, q(\mathbf{c}_i)}}$.  
By looking at the exact sequence~\eqref{eq: monmap}, we see that the rays of 
$\nabla_{\Sigma_{\Xi_i, q(\mathbf{c}_i)}}$ are the elements of $\Xi_i$.
Therefore, $\nabla_{{\Sigma_{\Xi_i, q(\mathbf{c}_i)}}} = \op{Conv}(\Xi_i) \subseteq M_{\R}$
Hence,
\[
\nabla_{\Sigma} \subseteq \nabla_{{\Sigma_{\Xi_1, q(\mathbf{c}_1)}}}^\circ \cap \nabla_{{\Sigma_{\Xi_2, q(\mathbf{c}_2)}}}^\circ \subseteq N_{\R} .
\]
Notice that $X_{ \Sigma_{\Xi_i, q(\mathbf{c}_i)}}$ has a nef anticanonical bundle since $\Xi_i \subseteq \op{Conv}(\op{Conv}(\nabla_{\Sigma})^* \cap M)$ has a unique interior lattice point. 
Therefore, $\mathcal Z_{D \otimes_{\Z} \C^*, \Sigma_{\Xi_1, q(\mathbf{c}_1)}}, \mathcal Z_{D \otimes_{\Z} \C^*, \Sigma_{\Xi_2, q(\mathbf{c}_2)}}$ satisfy the hypotheses of Theorem~\ref{thm: main theorem}.
\end{proof}

\begin{proof}[Proof of Theorem~\ref{EquivMirrorsHypers}]
The mirror construction of Artebani-Comparin-Guilbot is a special case of Clarke's construction where $ (\op{Conv}(\Xi_1), \nabla_{\Sigma}^*) = (\Delta_1, \Delta_2) $ is a good pair, so the theorem follows from Corollary~\ref{cor: Clarke}. Batyrev mirror symmetry is the special case of Corollary~\ref{ACGthm} where $\Delta_1 = \Delta_2$.  Similarly, Berglund-H\"ubsch-Krawitz mirror symmetry is a special case by Theorem 2 of \cite{ACG16} and does not require resolutions as BHK mirrors are smooth. 
\end{proof}
 
 \begin{remark}
In the special case of Berglund-H\"ubsch-Krawitz mirror symmetry, the birational aspect was first proven by Shoemaker in \cite{Sho14} and the derived aspect was proven in \cite{FK}.  The latter assumes that $X_\Sigma$ is Gorenstein.  The result above drops the Gorenstein assumption. 
\end{remark}

\begin{remark}
Aspinwall and Plesser \cite{AP} provide a general mirror construction for Landau-Ginzburg models associated to two Gorenstein cones which are contained in one another's duals. In their theory, they allow non-geometric phases as gauged linear sigma models. Comparing these phases relates to an algebraic condition which inspired our results, see Section 4 and especially Corollary 4.7 of \cite{FK}. The method of the current paper is a toric description which focuses only on geometric phases related to Gorenstein cones of index 1.
\end{remark}

 \end{document}